\newtheorem{theorem}{Theorem}[section]
\newtheorem*{maintheorem}{Main Theorem}
\newtheorem{lemma}[theorem]{Lemma}
\newtheorem{proposition}[theorem]{Proposition}
\newtheorem*{brownscriterion}{Brown's Criterion}
\newtheorem{corollary}[theorem]{Corollary}
\theoremstyle{definition}
\newtheorem{definition}[theorem]{Definition}
\newtheorem{observation}[theorem]{Observation}
\numberwithin{equation}{section}
\newcommand{\serialcomma}{, }
\newcommand{\Z}{\mathbb{Z}}
\newcommand{\N}{\mathbb{N}}
\newcommand{\M}{\mathcal{M}}
\DeclareMathOperator{\Stab}{Stab}
\DeclareMathOperator{\lk}{lk}
\DeclareMathOperator{\dlk}{{\lk}{\downarrow}}
\newcommand{\NDInterval}{I}
\renewcommand{\:}{\colon}
\newcommand{\Poset}{\mathcal{P}}
\newcommand{\prePoset}{\widetilde{\mathcal{P}}}
\newcommand{\calE}{\mathcal{E}}
\newcommand{\calU}{\mathcal{U}}
\newcommand{\calV}{\mathcal{V}}
\newcommand{\calW}{\mathcal{W}}
\newcommand{\calX}{\mathcal{X}}
\newcommand{\calY}{\mathcal{Y}}
\newcommand{\calZ}{\mathcal{Z}}
\newcommand{\Triv}{\mathcal{T}}
\newcommand{\defeq}{\mathrel{\vcentcolon =}}
\newcommand{\comment}[1]{}
\newcommand{\tsup}[1]{\textsuperscript{#1}}
\newcommand{\realize}[1]{\lvert{#1}\rvert}
\newcommand{\VE}{VE}
\newcommand{\core}{\operatorname{core}}
\DeclareMathOperator{\F}{F}
\begin{document}

\title{The Brin--Thompson groups $sV$ are of type~$\F_\infty$}
\date{\today}
\subjclass[2010]{Primary 20F65;   
                 Secondary 57Q12} 
\keywords{Thompson's groups, finiteness properties}

\author[M.~G.~Fluch]{Martin G.~Fluch}
\address{Department of Mathematics, Bielefeld University,
  PO Box 100131, 33501 Bielefeld, Germany} 
\email{mfluch@math.uni-bielefeld.de}

\author[M.~Marschler]{Marco Marschler}
\address{Department of Mathematics, Bielefeld University,
  PO Box 100131, 33501 Bielefeld, Germany} 
\email{marco.marschler@math.uni-bielefeld.de}

\author[S.~Witzel]{Stefan Witzel}
\address{Mathematical Institute, University of M\"unster, 
  Einsteinstra\ss{}e 62, 48149 M\"unster, Germany}
\email{s.witzel@uni-muenster.de}

\author[M.~C.~B.~Zaremsky]{Matthew C.~B.~Zaremsky} 
\address{Department of Mathematical Science, Binghamton University,
  Binghamton, NY 13902, Unites States} 
\email{zaremsky@math.binghamton.edu}

\begin{abstract}
We prove that the Brin--Thompson groups~$sV$, also called higher
dimensional Thompson's groups, are of type~$\F_\infty$ for all
$s\in\N$.  This result was previously shown for~$s\le3$, by
considering the action of~$sV$ on a naturally associated space.  Our
key step is to retract this space to a subspace~$sX$ which is easier
to analyze.
\end{abstract}

\maketitle
\thispagestyle{empty}


\noindent Recall that a group is of \emph{type~$\F_\infty$} if it
admits a classifying space with finitely many cells in each dimension.
Well-known examples of groups of type~$\F_\infty$ include Thompson's
groups $F$, $T$\serialcomma and $V$.  Some generalizations of $V$ were
introduced by Brin~\cite{brin04,brin05} and shown to be simple.  We
denote these groups~$sV$, for $s\in\N$, with~$1V=V$.  These groups are
usually termed higher-dimensional Thompson's groups or Brin--Thompson
groups.  All of the groups $sV$ are known to be finitely presented
\cite{hennig12}, and Kochloukova, Martínez-Pérez\serialcomma and
Nucinkis \cite{kochloukova10} showed that~$2V$ and~$3V$ are of
type~$\F_\infty$.  We prove that this result extends to all
dimensions.

\begin{maintheorem}
\label{thrm:maintheorem}
The Brin--Thompson group $sV$ is of type~$\F_\infty$ for all $s$.
\end{maintheorem}

Fix some $s$.  There is a natural poset $\Poset_1$ associated to $sV$.
The realization $\realize{\Poset_1}$ of this poset is contractible and
the action of $sV$ is proper but not cocompact.  To prove the Main
Theorem it suffices to produce a cocompact filtration of
$\realize{\Poset_1}$ whose connectivity tends to infinity.  The tool
to study relative connectivity is discrete Morse theory.  This was
carried out for~$s=2,3$ in~\cite{kochloukova10}.  However, for
larger~$s$ this space quickly becomes cumbersome.

We therefore consider a subspace $sX$ of $\realize{\Poset_1}$ which we
call the \emph{Stein space} for~$sV$.  As before, the Stein space is
contractible and the action is not cocompact.  The advantage of the
Stein space is that the Morse theory becomes much easier to handle.

The paper is organized as follows.  In Section~\ref{sec:brin_thompson}
we recall the definition of $sV$.  The Stein space $sX$ is defined in
Section~\ref{sec:def_stein_space} and some basic properties are
verified.  In Section~\ref{sec:desc_link_conn} we analyze the
connectivity of the subspaces in the filtration and deduce the Main
Theorem.

\subsection*{Acknowledgments}

We would like to thank Kai-Uwe Bux for suggesting this project and for
many helpful discussions.  We also gratefully acknowledge support
through the SFB~701 in Bielefeld (first, second\serialcomma and fourth
author) and the SFB~878 in M\"unster (third author).  The research for
this article was carried out during the 2012 PCMI Summer Session in
Geometric Group Theory and we thank the organizers and the PCMI for
this opportunity.  Finally, we thank Matt Brin for his helpful
suggestions and remarks.


\section{The Brin--Thompson Groups}
\label{sec:brin_thompson}

The elements of the Brin--Thompson group $sV$ can be described as
dyadic self-maps of~$s$-dimensional cubes.  We will first give a brief
intuition for these maps, and then delve into some formalism.

To get an intuition for the elements of $sV$ for arbitrary~$s$, recall
first that elements of Thompson's group $V=1V$ can be thought of as
left-continuous, piecewise linear maps from the unit interval~$[0,1]$
to itself, where the slope of any linear piece is a positive dyadic
rational.  An equivalent description of such an element is obtained as
follows: first divide the unit interval representing the domain into
two halves and iterate this procedure by further subdividing some of
the resulting pieces.  Then similarly cut up the unit interval
representing the codomain into the same number of pieces as the
domain, and finally identify the pieces of the domain and codomain via
a permutation.  Note that the intervals identified in the last step
will usually have different lengths.  For more details
see~\cite{cannon96}.

To describe elements of~$sV$, we no longer think of the unit interval
but the unit~$s$-cube~$[0,1]^s$.  The unit $s$-cube can be halved by
dyadic hyperplanes in~$s$ different directions, as can any iterated
piece obtained this way.  As with $V$, an element of~$sV$ can be
described as a sequence of halvings of the domain and codomain and an
identification of the resulting pieces by a permutation.  Again the
identification will affinely deform the individual pieces.
Alternatively we can describe an element by a dyadic map from the
$s$-cube to itself.  A sequence of halvings of the~$s$-cube will be
modeled by ``dyadic coverings''.  To get an intuition, the reader
might want to look at Figure~\ref{fig:dyadic-map-2} (the map $f_1$
represents an element of~$2V$).  It may also be helpful to read
Section~1 of~\cite{burillo10}, which additionally details the
\emph{paired trees} model for elements of~$sV$.

\subsection{Dyadic maps and the group $sV$}

We now describe more formally the notions needed to define the group
$sV$, and also a certain poset $\Poset_1$, which will then be used to
define the space $sX$ for our main argument.

A real number is called \emph{dyadic} if it is of the form $k/2^\ell$
for some $k\in \Z$ and~$\ell \in \N_0$.  We denote by $\NDInterval$
the subspace of $[0,1]$ of non-dyadic numbers.  By a \emph{dyadic
interval} we mean a set of the form
$\bigl[\frac{k}{2^\ell},\frac{k+1}{2^\ell}\bigr] \cap \NDInterval$
with $k,\ell\in \N_0$, and the \emph{length} of the dyadic interval is
defined to be $1/2^{\ell}$.  A bijection $A \to B$ between dyadic
intervals is called a \emph{simple dyadic map} if it is affine of
positive slope.  Note that this slope will necessarily be a power of
two.

In general we consider the unit $s$-cube $I^s$ (or rather, the set of
non-dyadic points in the unit $s$-cube), which is the $s$-fold product
of~$I$.  A \emph{brick} is a subset $C$ of $I^s$ that is a product of
$s$ dyadic intervals, called the \emph{edges} of $C$, and the
\emph{volume} of $C$ is the product of the lengths of its edges.  Note
that the volume of a brick is always a power of two.  A \emph{dyadic
covering} is a finite set of bricks that disjointly
cover~$\NDInterval^s$ (note that by our definition the set $I$ does not
contain any dyadic numbers).

For a natural number $m$, denote by $I^s(m)$ the disjoint union
\begin{equation*}
I^s(m) \defeq B_1 \sqcup \cdots \sqcup B_m
\end{equation*}
where each $B_i$ is a copy of $I^s$.  Note that $I^s$ is the same
as~$I^s(1)$.  We call $B_i$ the \emph{$i$\tsup{th} block} of $I^s(m)$.
A covering $\calU$ of $I^s(m)$ is called \emph{dyadic} if it is a
disjoint union~$\calU = \calU_1\sqcup \cdots \sqcup \calU_m$
where~$\calU_i$ is a dyadic covering of the block~$B_i$.  We denote
by~$\Triv_m \defeq \{B_1, \ldots, B_m\}$ the \emph{trivial} dyadic
covering of $I^s(m)$, in which the bricks are just the blocks
themselves.

\begin{observation}
\label{obs:covering_lattice}
The set of dyadic coverings of $\NDInterval^s(m)$ is a lattice with
respect to the refinement relation.
\end{observation}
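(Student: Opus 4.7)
The plan is to verify the lattice axioms by constructing joins explicitly and then obtaining meets by a finiteness argument. I order the set of dyadic coverings by $\calU \le \calV$ iff every brick of $\calV$ is contained in some brick of $\calU$ (so that $\calV$ refines $\calU$); the trivial covering $\Triv_m$ is then the unique minimum.

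For joins I would try the natural candidate
\[
\calU \vee \calV \defeq \{U \cap V : U \in \calU,\ V \in \calV,\ U \cap V \neq \emptyset\}.
\]
The crucial point is that $\calU \vee \calV$ really does consist of bricks. This in turn reduces to the claim that the intersection of two dyadic intervals in $\NDInterval$ is again a dyadic interval or empty: the dyadic intervals form a binary tree under inclusion, and having removed the dyadic points from $\NDInterval$ prevents distinct dyadic intervals from meeting in a lone endpoint, so any two of them are either nested or disjoint. Taking products gives the same statement for bricks. Once this is in place, verifying that $\calU \vee \calV$ refines both $\calU$ and $\calV$ and is minimal among common refinements is formal: any other common refinement $\calW$ places each of its bricks inside a unique $U \in \calU$ and a unique $V \in \calV$, hence inside $U \cap V$.

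For meets I intend to use the standard semilattice fact that a nonempty, finite join-subsemilattice has a greatest element. Let $S$ be the set of common coarsenings $\calW$ with $\calW \le \calU$ and $\calW \le \calV$. It contains $\Triv_m$ and so is nonempty. Since every brick of such a $\calW$ is a disjoint union of bricks of $\calU$, and $\calU$ is finite, the set $S$ is finite. It is closed under $\vee$ because if $\calW_1, \calW_2 \in S$ then $\calU$ is a common refinement of $\calW_1$ and $\calW_2$, forcing $\calW_1 \vee \calW_2 \le \calU$, and similarly for $\calV$. The maximum of $S$ then exists and serves as $\calU \wedge \calV$. The only nontrivial step is the initial intersection claim for dyadic intervals; after that the lattice structure is purely formal.
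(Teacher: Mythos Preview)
Your argument is correct and follows the same outline as the paper's: establish that joins (coarsest common refinements) exist and that $\Triv_m$ is a minimum, then deduce meets from these by order-theoretic generalities. The paper simply declares both the existence of joins and the passage to meets to be ``clear'' and ``standard order theory,'' whereas you have written out the details---the explicit intersection description of the join (using that dyadic intervals form a tree so that bricks are either nested or disjoint), and the finiteness argument showing that the set of common coarsenings is a finite nonempty join-subsemilattice and hence has a maximum. Nothing is missing; your version is just a fleshed-out form of the paper's sketch.
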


\begin{proof}
Existence of joins (that is, coarsest common refinements) as well as
existence of a unique minimum (namely, $\Triv_m$) are clear.  The
statement now follows from standard order theory.
\end{proof}

Let $\calU$ and $\calV$ be dyadic coverings of $I^s(m)$ and $I^s(n)$,
respectively, and let $f\: I^s(m) \to I^s(n)$ be a map.  We say that
the pair of dyadic coverings $(\calU, \calV)$ is \emph{compatible}
with $f$ if for every $C\in\calU$, $f|_C$ is a product of simple
dyadic maps and~$f(C) \in \calV$.  Less formally, this means that
every brick in the domain maps in an affine way to a brick in the
codomain.  If such a pair of dyadic coverings exists, then we say
that~$f$ is a \emph{dyadic map}.  It is easy to see that composition
of two dyadic maps is again a dyadic map, that every dyadic map is
invertible\serialcomma and that the inverse of a dyadic map is dyadic.

There is a combinatorial description of dyadic maps.  If $f \: I^s(m)
\to I^s(n)$ is a dyadic map and $(\calU_1, \calU_2)$ is a compatible
covering, then $f$ induces a bijection of dyadic coverings $\calU_1\to
\calU_2$.  Conversely, every bijection of dyadic coverings $\calU_1\to
\calU_2$ induces a dyadic map $I^s(m) \to I^s(n)$.

Note that two bijections $\calU_1 \to \calV_1$ and $\calU_2 \to
\calV_2$ induce the same map $I^s(m) \to I^s(n)$ if and only if there
are common refinements $\calU$ and $\calV$ such that the induced
bijections $\calU \to \calV$ coincide.

\begin{definition}
\label{def:sV}
The Brin--Thompson group $sV$ is the group of all dyadic self maps
of~$I^s$ with the multiplication given by composition, $gh \defeq
g\circ h$.
\end{definition}

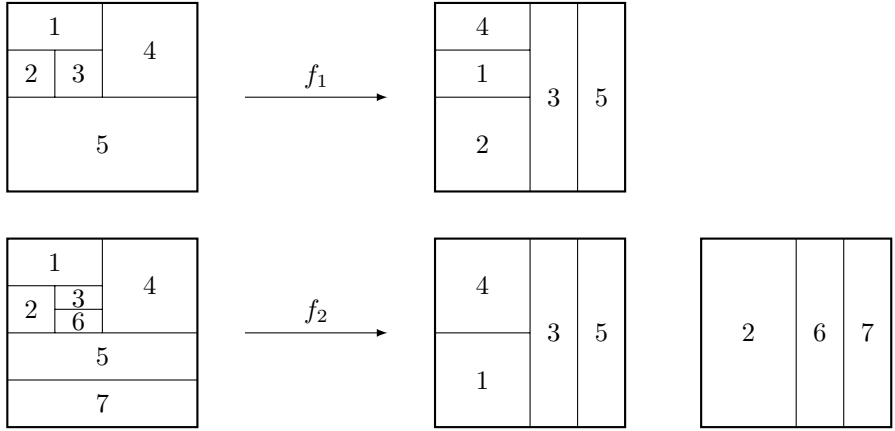
\begin{figure}
\centering
\begin{tikzpicture}[scale=1.25]
  \draw[thick] (0,0) rectangle (2,2);
  \draw
  (0,1) -- (2,1) 
  (1,1) -- (1,2)
  (0,1.5) -- (1,1.5) 
  (0.5,1) -- (0.5,1.5)
  
  (0.5,1.75)  node {1} 
  (0.25,1.25) node {2}
  (0.75,1.25) node {3}
  (1.5,1.5)   node {4}
  (1,0.5)     node {5};
  
  \draw[-latex] (2.5,1) -- (4,1) node[pos=0.5,above] {$f_1$};
  
  \draw[xshift=-.5cm, thick] (5,0) rectangle (7,2);
  \draw[xshift=-.5cm]
  (6,0) -- (6,2)
  (6.5,0) -- (6.5,2)
  (5,1) -- (6,1)
  (5,1.5) -- (6,1.5)
  
  (5.5,1.25) node {1}
  (5.5,0.5)  node {2}
  (5.5,1.75) node {4}
  (6.25,1)   node {3}
  (6.75,1)   node {5};
  
  \begin{scope}[yshift=-2.5cm]
  \draw[thick] (0,0) rectangle (2,2);
  \draw
  (0,1) -- (2,1) 
  (1,1) -- (1,2)
  (0,1.5) -- (1,1.5) 
  (0.5,1.25) -- (1,1.25)
  (0.5,1) -- (0.5,1.5)
  (0,0.5) -- (2,0.5);
  
  \draw
  (0.5,1.75)   node {1} 
  (0.25,1.25)  node {2}
  (0.75,1.37)  node {3}
  (0.75,1.125) node {6}
  (1.5,1.5)    node {4}
  (1,0.25)     node {7}
  (1,0.75)     node {5};
  
  \draw[-latex] (2.5,1) -- (4,1)node[pos=0.5,above] {$f_2$};
  
  \draw[xshift=-.5cm, thick] (5,0) rectangle (7,2);
  \draw[xshift=-.5cm]
    (6,0) -- (6,2)
    (6.5,0) -- (6.5,2)
    (5,1) -- (6,1)
  
   (5.5,1.5) node {4}
   (5.5,0.5) node {1}
   (6.25,1)  node {3}
   (6.75,1)  node {5};
  
  \draw[xshift=-.7cm, thick] (8,0) rectangle (10,2);
  \draw[xshift=-.7cm]
  (9,0) -- (9,2)
  (9.5,0) -- (9.5,2)
 
  (8.5,1)  node {2}
  (9.25,1) node {6}
  (9.75,1) node {7};
  \end{scope}
\end{tikzpicture}

\caption{An example of dyadic map $f_1\: I^2(1) \to I^2(1)$ and a
dyadic map $f_2\: I^2(1) \to I^2(2)$.  Note that $f_2$ is obtained
from~$f_1$ by splitting along a horizontal line.  The map $f_2$ is
equivalent in $\Poset_1$ to the one where the blocks on the right are
interchanged.}
\label{fig:dyadic-map-2}
\end{figure}

\subsection{The poset $\Poset_1$}

In order to define the poset $\Poset_1$ on which $sV$ acts we need
some more notation.

Denote by~$\prePoset_{m,n}$ the set of all dyadic maps $f\: I^s(m)\to
I^s(n)$, so for example~$\prePoset_{1,1}=sV$.  Let $\prePoset$ be the
union of the~$\prePoset_{m,n}$ where $m$ and $n$ range over all
positive integers.  Also denote by~$\prePoset_m$ the subset
of~$\prePoset$ where the domain of the maps consists of $m$ blocks.

There is a natural action of~$sV$ on $\prePoset_1$ given by
precomposition: $f^g \defeq f\circ g$ for~$g \in sV$ and~$f \in
\prePoset_1$.  For each positive $n$ there is also an action of the
symmetric group $S_n$ on~$\prePoset_{m,n}$ by permuting the blocks of
the codomain.  We denote the quotient~$\prePoset_{m,n} / S_n$
by~$\Poset_{m,n}$.  In other words, an element of $\Poset_{m,n}$ is
obtained from~$\prePoset_{m,n}$ by forgetting the order of the blocks
in the codomain.  We set
\begin{equation*}
\Poset \defeq \bigcup_{n,m\ge 1} \Poset_{m,n} \qquad \text{and}
\qquad \Poset_1 \defeq \bigcup_{n\ge 1} \Poset_{1,n} \text{\,.}
\end{equation*}

Note that $\prePoset_{1,n}$ is an $sV$-invariant subset of
$\prePoset_1$, and the action of $sV$ on $\prePoset_{1,n}$ commutes
with the action of the symmetric group~$S_n$, so we get an action
of~$sV$ on~$\Poset_{1,n}$ for every $n$.  In particular the
$sV$-action on~$\prePoset_1$ induces an action of~$sV$ on~$\Poset_1$.

\begin{definition}
\label{def:t}
The function $t\: \Poset \to \N$ assigns to each $x\in \Poset$ the
number of blocks in the codomain of~$x$, that is, if $x\in
\Poset_{m,n}$ for some $m$, then $t(x)=n$.
\end{definition}

Next we define a poset structure on $\Poset$ using the notion of
``splitting''.  A dyadic map $z\: I^s(m) \to I^s(n)$ is called a
\emph{splitting (along $\calU$)} if~$z$ is compatible with a pair of
dyadic coverings of the form $(\calU, \Triv_n)$.  The splitting $z$ is
called \emph{non-trivial} if~$n>m$.  Colloquially then, as the name
implies, a non-trivial splitting is given by splitting up some cubes
(and then not sticking any resulting cubes together).  The inverse of
a splitting (along $\calU$) is called a \emph{merging} (along
$\calU$).

We define an order~$\le$ on $\prePoset$ by saying that~$x< y$ if
there exists a non-trivial splitting~$z$ such that $y=z\circ x$, that
is, if~$y$ is obtained from $x$ by non-trivial splitting.  We also
denote the induced order on $\Poset$ by $\le$.  In particular,
$\Poset_1$ is ordered by $\le$.  See Figure~\ref{fig:dyadic-map-2} for
an example of dyadic maps and splitting.

The poset $\Poset_1$ is filtered by the $t$-sublevel sets
\begin{equation*}
\Poset_1^{\le n} = \bigcup_{1 \le k \le n} \Poset_{1,k} \text{\,.}
\end{equation*}

We make the following easy observations.

\begin{observation}
\label{obs:big_space_contractible}
The poset $\prePoset_1$ is directed (that is, any two elements have a
common upper bound).  Therefore, $\realize{\prePoset_1}$ and
$\realize{\Poset_1}$ are contractible.
\end{observation}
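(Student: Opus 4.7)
The plan is to prove this in two steps: first I would establish that $\prePoset_1$ is directed, and then I would invoke the standard fact that the realization of a directed poset is contractible.

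For directedness, given two elements $f_i\: I^s \to I^s(n_i)$ of $\prePoset_1$ with compatible coverings $(\calU_i, \Triv_{n_i})$, I would appeal to Observation~\ref{obs:covering_lattice} to pick a common refinement $\calU$ of $\calU_1$ and $\calU_2$, then perform one additional halving so that $n \defeq \lvert \calU \rvert$ strictly exceeds both $n_1$ and $n_2$. After choosing an enumeration $\calU = \{C_1, \ldots, C_n\}$, I would define $g\: I^s \to I^s(n)$ by sending each $C_k$ affinely onto the block $B_k$. Because $\calU$ refines $\calU_i$, the image $f_i(\calU)$ is a dyadic covering of $I^s(n_i)$, and $g \circ f_i^{-1}$ carries each of its bricks onto a block of $I^s(n)$, so it is a splitting, non-trivial because $n > n_i$; hence $g > f_i$ for $i = 1, 2$. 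Since common upper bounds in $\prePoset_1$ project to common upper bounds in $\Poset_1$, the latter is directed as well.

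For contractibility, I would use the standard argument that the realization of a directed poset is contractible: any continuous map from a compact space into $\realize{\prePoset_1}$ factors through a finite subcomplex, whose finitely many vertices share a common upper bound $p$ by directedness, so the image is contained in $\realize{\{x \in \prePoset_1 \mid x \le p\}}$. This subposet has greatest element $p$, so its realization is a cone with apex $p$ and hence contractible; the original map is therefore nullhomotopic. Thus $\realize{\prePoset_1}$ is weakly contractible, and Whitehead's theorem upgrades this to contractibility. Exactly the same argument yields contractibility of $\realize{\Poset_1}$.

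No step here is a serious obstacle. The one small point to watch is the non-triviality condition in the definition of $<$; performing an extra halving when taking the common refinement ensures that the splittings $g \circ f_i^{-1}$ are genuinely non-trivial, so that $g > f_i$ really holds in the order on $\prePoset_1$.
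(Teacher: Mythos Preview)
Your argument is correct in outline, and since the paper records this as an observation without proof there is nothing substantive to compare against.  One point needs fixing, however: you begin by choosing compatible coverings of the form $(\calU_i,\Triv_{n_i})$ for the $f_i$, but that would make each $f_i$ a splitting, and a general element of $\prePoset_{1,n_i}$ need not be one (the preimage of a block under a dyadic map is a union of bricks, not in general a single brick).  Fortunately your argument never actually uses the triviality of the codomain covering.  If you instead take any compatible pair $(\calU_i,\calV_i)$ for $f_i$ and let $\calU$ be a common refinement of $\calU_1,\calU_2$, then $f_i(\calU)$ is still a dyadic covering of $I^s(n_i)$ (because $f_i$ is a product of simple dyadic maps on each brick of $\calU_i$, hence carries sub-bricks to sub-bricks), and your map $g\circ f_i^{-1}$ is compatible with $(f_i(\calU),\Triv_n)$, so it is a splitting exactly as you claim.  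With that correction the directedness argument goes through, and your contractibility argument via cones under common upper bounds is the standard one.
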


\begin{observation}
\label{obs:stabilizers}
The action of $sV$ on $\prePoset_1$ is free.  Thus, for any vertex $x$
in $\realize{\Poset_1}$, the stabilizer $\Stab_{sV}(x)$ is finite.
Hence all cell stabilizers are finite and of type~$\F_\infty$.
\end{observation}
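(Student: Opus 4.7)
The plan is to verify the three assertions in order, each following essentially formally from the definitions.

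For freeness of the $sV$-action on $\prePoset_1$, I would simply note that every dyadic map is invertible, so $f \circ g = f$ forces $g = \id$. Thus $sV$ acts freely on $\prePoset_1$ by precomposition.

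For the finiteness of vertex stabilizers in $\realize{\Poset_1}$, fix a vertex $x \in \Poset_1$ with $t(x) = n$, and pick a representative $f \in \prePoset_{1,n}$, so that $x$ is the $S_n$-orbit of $f$. An element $g \in sV$ lies in $\Stab_{sV}(x)$ precisely when $f \circ g$ and $f$ belong to the same $S_n$-orbit, i.e.\ when there exists $\sigma \in S_n$ with $f \circ g = \sigma \cdot f$. Since $f$ is invertible, this $g$ is uniquely determined by $\sigma$, namely $g = f^{-1} \circ (\sigma \cdot f)$ (which is automatically a dyadic self-map of $I^s$). Assigning this $\sigma$ to $g$ defines an injection $\Stab_{sV}(x) \hookrightarrow S_n$, by freeness of the $\prePoset_1$-action, so $|\Stab_{sV}(x)| \le n!$.

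Finally, since $\realize{\Poset_1}$ is the geometric realization of a poset, a cell is a simplex spanned by a chain $x_0 < \cdots < x_k$, and its setwise stabilizer coincides with the intersection $\bigcap_i \Stab_{sV}(x_i)$ (the order-preserving action cannot permute the vertices of the chain). This intersection is finite by the previous step, and finite groups are of type~$\F_\infty$. There is really no obstacle here; the observation is purely bookkeeping, serving to reduce the Main Theorem, via Brown's Criterion applied to $\realize{\Poset_1}$, to producing a cocompact filtration with connectivity tending to infinity.
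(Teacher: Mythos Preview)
Your argument is correct, and the paper itself provides no proof at all for this observation, treating each claim as immediate from the definitions. Your write-up simply makes explicit the standard bookkeeping (invertibility of dyadic maps for freeness, the injection into $S_n$ for vertex stabilizers, and the order-preserving nature of the action for cell stabilizers), which is exactly the intended reasoning.
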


\begin{observation}
\label{obs:cocompactness}
The action of $sV$ on $\Poset_{1}^{= 1}$ is transitive, and for each
$n\ge 1$ the sublevel set $\realize{\Poset_{1}^{\le n}}$ is locally
finite.  Hence $\realize{\Poset_{1}^{\le n}}$ is finite modulo $sV$.
\end{observation}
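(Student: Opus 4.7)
The plan is to verify each of the three assertions in turn.

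First, since $S_1$ is trivial, $\Poset_1^{=1} = \Poset_{1,1} = \prePoset_{1,1}$ coincides with $sV$ as a set, and the $sV$-action $f^g = f \circ g$ is right-multiplication on the group, which is transitive (in fact free). The same argument shows that $sV$ acts transitively on every~$\Poset_{1,k}$: given any $f_0, f \in \prePoset_{1,k}$, the composite $g \defeq f_0^{-1} \circ f$ is a dyadic self-map of~$I^s$, hence an element of~$sV$, and satisfies $f_0^g = f$.

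Next, for local finiteness, fix $x \in \Poset_{1,k}$ with $k \le n$ and a representative $\tilde x \: I^s \to I^s(k)$. An element $y > x$ in $\Poset_1^{\le n}$ at level~$k'$ corresponds to a non-trivial splitting $z \: I^s(k) \to I^s(k')$ with $k < k' \le n$; modulo the $S_{k'}$-action on the codomain this is exactly the datum of a dyadic covering~$\calU$ of $I^s(k)$ with $|\calU| = k'$ strictly refining~$\Triv_k$. The number of such coverings is finite by a short induction on brick count: any dyadic covering with $k'$ bricks is obtained from one with $k'-1$ bricks by halving a single brick along one of $s$ coordinate directions. Dually, an element $y < x$ corresponds to a merging of the codomain of~$\tilde x$, and only finitely many such mergings exist, since they amount to partitioning the $k$ bricks of~$\tilde x$ into fewer than $k$ groups that each form a dyadic covering of some larger brick.

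Cocompactness then follows by combining the two previous paragraphs: the $sV$-action on the vertex set of $\realize{\Poset_1^{\le n}}$ has at most $n$ orbits, and by local finiteness each vertex is contained in only finitely many simplices, so $\realize{\Poset_1^{\le n}}$ has finitely many $sV$-orbits of simplices. The only mildly delicate point is the combinatorial finiteness of dyadic coverings of prescribed cardinality, handled by the induction above; everything else amounts to unpacking the definitions and using the group structure of~$sV$.
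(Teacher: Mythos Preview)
The paper states this as an ``easy observation'' and gives no proof, so your write-up is filling a genuine gap. Your overall strategy---transitivity on each level~$\Poset_{1,k}$, local finiteness of~$\realize{\Poset_1^{\le n}}$, hence finitely many simplex orbits---is sound, and the arguments for transitivity and for the downward direction of local finiteness are fine.

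There is, however, a real error in your justification that there are only finitely many dyadic coverings of~$I^s(k)$ with~$k'$ bricks. You assert that ``any dyadic covering with~$k'$ bricks is obtained from one with~$k'-1$ bricks by halving a single brick,'' i.e.\ that every nontrivial dyadic covering contains a mergeable pair. This is false for~$s\ge 3$. For~$s=3$, take
\[
\begin{aligned}
A_1 &= I \times [0,\tfrac12] \times [0,\tfrac12], &
A_2 &= [0,\tfrac12] \times I \times [\tfrac12,1], &
A_3 &= [\tfrac12,1] \times [\tfrac12,1] \times I,\\
B_1 &= [0,\tfrac12] \times [\tfrac12,1] \times [0,\tfrac12], &
B_2 &= [\tfrac12,1] \times [0,\tfrac12] \times [\tfrac12,1]. & &
\end{aligned}
\]
These five bricks are pairwise disjoint and have total volume~$1$, so they form a dyadic covering in the paper's sense; but one checks that no two of them share two edge-intervals, so no pair merges to a brick. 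Thus this covering is \emph{not} obtained from a $4$-brick covering by a single halving, and your induction breaks down.

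The conclusion you want is nevertheless true, via a different (and easier) argument: if~$2^{-\ell_1}+\cdots+2^{-\ell_{k'}}=1$ with each~$\ell_i\ge 0$, then~$\max_i \ell_i \le k'-1$ (induct on~$k'$, pairing two minimal-volume terms). Hence every brick in a $k'$-brick covering has volume at least~$2^{-(k'-1)}$, so every edge has length at least~$2^{-(k'-1)}$, and there are only finitely many dyadic intervals of length~$\ge 2^{-(k'-1)}$. This bounds the number of bricks, hence the number of coverings. Replacing your inductive step with this volume bound repairs the proof.
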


These observations suggest that the filtration
$(\realize{\Poset_1^{\le n}})_n$ of $\realize{\Poset_1}$ could be used
to show that~$sV$ is of type~$\F_\infty$, using Brown's Criterion.

\begin{brownscriterion}
\cite[Corollary~3.3]{brown87}
\label{brownscriterion}
Let $G$ be a group and $X$ a contractible $G$-CW-complex such that the
stabilizer of every cell is of type~$\F_\infty$.  Let $\{X_{j}\}_{j\ge
1}$ be a filtration of $X$ such that each $X_{j}$ is finite
$\textnormal{mod}~G$.  Suppose that the connectivity of the pair
$(X_{j+1},X_j)$ tends to~$\infty$ as~$j$ tends to~$\infty$.  Then $G$
is of type~$\F_\infty$.
\end{brownscriterion}

It would suffice now to show that the connectivity of the pair
$(\realize{\Poset_1^{\le n+1}},\realize{\Poset_1^{\le n}})$ tends
to~$\infty$ as~$n$ tends to~$\infty$.  This was proved by Kochloukova,
Martínez-Pérez\serialcomma and Nucinkis~\cite{kochloukova10} for the
cases $s=2,3$.  However, it becomes increasingly difficult to verify
for higher~$s$.  The main difference of our approach here is that we
consider a certain subcomplex~$sX$ of $\realize{\Poset_1}$.  Analyzing
the relative connectivity in~$sX$ turns out to be substantially easier
than in $\realize{\Poset_1}$.


\section{The Stein space for $sV$}
\label{sec:def_stein_space}

The idea of passing to what we are calling a ``Stein space'' was first
introduced by Stein~\cite{stein92}, and in particular was used to
obtain a new proof that~$F$ is of type~$\F_\infty$.  This construction
generalizes nicely to deal with some complicated versions of
Thompson's groups.  For example Stein spaces were used in~\cite{bux12}
to prove that braided Thompson's groups are of type~$\F_\infty$.  The
key idea is that the splitting establishing a relation~$x\le y$ can be
obtained from ``elementary splittings'' that give rise to elementary
relations $x\preceq x_1\preceq\ldots\preceq x_r\preceq y$, and these
small steps are much easier to understand locally.  Heuristically, an
elementary splitting amounts to halving an~$s$-cube at most once in
any given direction.  We now describe more rigorously the construction
of the Stein space.

\begin{definition}
\label{def:elem_split}
Call a brick $C$ \emph{elementary} if every edge of $C$ has length at
least~$1/2$.  Call an elementary brick \emph{very elementary} if it
has volume at least~$1/2$.  A~dyadic covering $\calU$ is called
\emph{(very) elementary} if every brick in $\calU$ has this property.
Likewise, a splitting or merging along $\calU$ is \emph{(very)
elementary} if~$\calU$ is.
\end{definition}

For $x,y\in \Poset$, if $y$ can be obtained from $x$ by an elementary
splitting, write $x\preceq y$; if moreover $x\neq y$ then we write
$x\prec y$.  If $y$ is obtained from $x$ by a very elementary
splitting, write $x\sqsubseteq y$; if moreover $x\neq y$, then we
write $x\sqsubset y$.  Note that the relations $\preceq$ and
$\sqsubseteq$ are not transitive.  In particular, the length of a
chain of very elementary splittings is bounded by the number of
blocks.  However, if $x_1\le x_2\le x_3$ and $x_1\preceq x_3$ then
$x_1\preceq x_2$ and $x_2\preceq x_3$, and analogously
for~$\sqsubseteq$.  It is clear that the action of $sV$ respects the
relations $\le$, $\preceq$ and~$\sqsubseteq$.

Clearly $I^s(m)$ has a unique maximal elementary covering $\calE$ by
$m \cdot 2^s$ bricks all of which have volume $2^{-s}$.  A covering is
elementary if and only if $\calE$ is a refinement of it.

The closed interval $[x,y]$ in $\Poset_1$ is defined to be $[x,y]
\defeq \{w\in \Poset_1\mid x\le w\le y\}$; the open and half-open
intervals are defined analogously.  Call an interval~$[x,y]$
in~$\realize{\Poset_1}$ \emph{elementary} if $x\preceq y$, and
\emph{very elementary} if $x\sqsubseteq y$.  A simplex of
$\realize{\Poset_1}$ is \emph{(very) elementary} if there is a (very)
elementary interval that contains all of its vertices.

\begin{definition}
\label{def:stein-space}
The \emph{Stein space for $sV$}, denoted $sX$, is the subcomplex
of~$\realize{\Poset_1}$ consisting of elementary simplices.
\end{definition}

The following statement is the key to showing the contractibility of
the Stein space.

\begin{lemma}\label{lem:elem_core}
Let $x,y\in \Poset_1$ with $x\le y$.  There exists a unique $y_0\in
[x,y]$ such that~$x\preceq y_0$ and for any $x\preceq w\le y$, we
have $w\le y_0$.  If $x < y$, then $x < y_0$.
\end{lemma}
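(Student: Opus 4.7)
The plan is to reformulate the problem using the lattice structure of dyadic coverings from Observation~\ref{obs:covering_lattice}. Fix a representative so that $y = z \circ x$, where $z$ is a splitting of $I^s(t(x))$ along a dyadic covering $\calU$. Any $w$ with $x \le w \le y$ then corresponds bijectively to a dyadic covering $\calU_w$ of $I^s(t(x))$ satisfying $\Triv_{t(x)} \le \calU_w \le \calU$ in the refinement order, and under this bijection $x \preceq w$ holds precisely when $\calU_w$ is elementary.

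Next I would take $y_0$ to correspond to the maximum of the set
\[
S = \{\calU' : \Triv_{t(x)} \le \calU' \le \calU,\ \calU' \text{ elementary}\}.
\]
A covering is elementary exactly when it satisfies $\calU' \le \calE$, where $\calE$ is the maximal elementary covering of $I^s(t(x))$. Hence $S$ is the intersection of two principal downsets and is closed under joins (a join of elements below a common upper bound remains below that upper bound). Together with $\Triv_{t(x)} \in S$ and the finiteness of $[\Triv_{t(x)}, \calU]$, this yields a unique maximum $\calU_0 = \calU \wedge \calE$ of $S$; let $y_0$ correspond to $\calU_0$.

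The three ``easy'' claims then follow formally: $y_0 \in [x, y]$ and $x \preceq y_0$ by construction, while for any $w$ with $x \preceq w \le y$ the covering $\calU_w$ lies in $S$, whence $\calU_w \le \calU_0$ and so $w \le y_0$; uniqueness is immediate from $y_0 = \max S$.

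The main obstacle is the last claim, which reduces to showing $\calU_0 > \Triv_{t(x)}$ whenever $\calU > \Triv_{t(x)}$. Under this hypothesis some block $B_i$ has $\calU|_{B_i} \neq \{B_i\}$, and it suffices to exhibit a non-trivial elementary covering of $B_i$ coarsened by $\calU|_{B_i}$. If some direction $j$ is \emph{respected} by $\calU|_{B_i}$, in the sense that every brick lies in one of the halves $B_i^{j,L}$ or $B_i^{j,R}$, then this halving itself gives the required coarsening. The genuinely delicate case, which can arise for $s \ge 3$, is when no direction is respected; here one must analyze the $\calU|_{B_i}$-bricks and their minimal elementary enclosing bricks more carefully to assemble them into a non-trivial elementary saturated partition of $B_i$.
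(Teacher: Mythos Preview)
Your construction coincides with the paper's: pick a representative $\tilde x$ for $x$, encode $y$ by the dyadic covering $\calU$ of $I^s(m)$ along which one splits, and let $y_0$ correspond to the meet $\calE \wedge \calU$ in the lattice of dyadic coverings. Your verification that $y_0 \in [x,y]$, that $x \preceq y_0$, and that $y_0$ dominates every $w$ with $x \preceq w \le y$ (whence uniqueness) is correct and more explicit than the paper's one-line appeal to Observation~\ref{obs:covering_lattice}.

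You are also right that the last assertion --- $x<y \Rightarrow x<y_0$, equivalently $\calU \ne \Triv_m \Rightarrow \calE \wedge \calU \ne \Triv_m$ --- is not a formal consequence of the lattice axioms (in a general lattice one can certainly have $a>0$ with $a\wedge b = 0$), so something specific about dyadic coverings is needed; the paper does not isolate this point and simply folds it into the same reference to Observation~\ref{obs:covering_lattice}. Your halving argument handles the case where some coordinate direction is respected, and this indeed already disposes of $s \le 2$ (for $s=2$, two bricks with full edge in different directions necessarily meet, so in any non-trivial covering some direction is respected). The gap is that for $s \ge 3$ you only \emph{announce} that the no-respected-direction case needs a closer analysis of elementary hulls, without carrying it out; so your proof stops precisely where the paper is silent. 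A cleaner reformulation than the one you sketch: it suffices to show that every atom of the covering lattice is elementary, since any non-trivial $\calU$ dominates some atom in the finite interval $[\Triv_m,\calU]$, and an elementary atom lies below $\calE$. Equivalently, one must show that every \emph{non-elementary} covering admits a non-trivial proper coarsening. For this, take a brick $C$ with an edge $E_j(C)$ of globally minimal length $\ell<\tfrac12$; minimality of $\ell$ forces every brick meeting the $j$-sibling region $C'$ to have $j$-th edge exactly the sibling interval $E_j'$ (a strictly larger $j$-th edge would contain $E_j(C)$ and hence meet $C$), and from this one can extract a non-trivial coarsening. This still requires a short extra argument, but it is more tractable than assembling an elementary partition from enclosing bricks as you propose.
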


\begin{proof}
Set $m \defeq t(x)$ and $n\defeq t(y)$.  Let $\tilde{x}$ be a
representative in $\prePoset_1$ for $x$.  Let~$\calU$ be the dyadic
covering of $I^s(m)$ such that $y$ is obtained from $\tilde{x}$ by
splitting along $\calU$.  Let~$\calE$ be the maximal elementary
covering of $I^s(m)$.  The element $y_0$ is obtained from $\tilde{x}$
by splitting along the finest common coarsening $\calE \wedge \calU$.
The desired properties follow from
Observation~\ref{obs:covering_lattice}.
\end{proof}

For $x\le y$, call the $y_0$ from the lemma the \emph{elementary core
of $y$ with respect to~$x$}, and denote it $\core_x(y) \defeq y_0$.
When $x$ is understood we omit the subscript.  Observe that if
$y_1\le y_2$ then $\core(y_1)\le \core(y_2)$, that is, taking
elementary cores respects the poset relation.
Figure~\ref{fig:elem_core} gives an example of an elementary core.

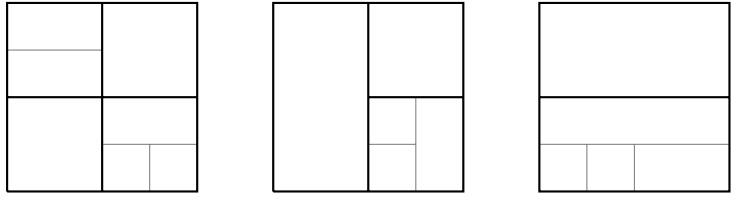
\begin{figure}
 \centering
 \begin{tikzpicture}[scale=1.25]
  \draw[gray]
    (0,0) -- (2,0) -- (2,2) -- (0,2) -- (0,0)   
    (1,0) -- (1,2)   (0,1) -- (2,1) 
    (0,1.5) -- (1,1.5)   (1,0.5) -- (2,0.5)
    (1.5,0.5) -- (1.5,0);
   \draw[thick]
    (0,0) -- (2,0) -- (2,2) -- (0,2) -- (0,0)
    (1,0) -- (1,2)
    (0,1) -- (2,1);
   \begin{scope}[xshift=2.8cm]
   \draw[gray]
    (0,0) -- (2,0) -- (2,2) -- (0,2) -- (0,0)
    (1,0) -- (1,2)
    (1,1) -- (2,1)
    (1.5,0) -- (1.5,1)   
    (1,0.5) -- (1.5,0.5);
   \draw[thick]
    (0,0) -- (2,0) -- (2,2) -- (0,2) -- (0,0)
    (1,0) -- (1,2) 
    (1,1) -- (2,1);
   \end{scope}
   \begin{scope}[xshift=5.6cm]
   \draw[gray]
    (0,0.5) -- (2,0.5)  (1,0) -- (1,0.5)
    (0.5,0) -- (0.5,0.5);
   \draw[thick] 
    (0,0) rectangle (2,2)  (0,1) -- (2,1);
   \end{scope}
 \end{tikzpicture}
\caption{A non-elementary dyadic covering, for $s=2$.  The thick lines
indicate the elementary core.}
\label{fig:elem_core}
\end{figure}

\begin{lemma}
\label{lem:cube_lemma}
For $x<y$ with $x\not\prec y$, $|(x,y)|$ is contractible.
\end{lemma}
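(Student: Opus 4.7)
The plan is to deformation retract $|(x,y)|$ onto the subcomplex $|(x,y_0]|$, where $y_0 \defeq \core_x(y)$, and then observe that this subcomplex is a cone with apex $y_0$ and hence contractible. The hypothesis $x\not\prec y$ is used precisely to guarantee that $y_0 < y$ strictly: otherwise $y_0 = y$ would mean $x \preceq y$ by Lemma~\ref{lem:elem_core}, contradicting $x\not\prec y$. Combined with $x < y_0$ from the last clause of Lemma~\ref{lem:elem_core}, this places $y_0 \in (x,y)$, so $(x,y_0]$ is a legitimate sub-poset of $(x,y)$ with maximum element $y_0$.

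For the retraction, I would define $c \colon (x,y) \to (x,y_0]$ by $c(w) \defeq \core_x(w)$ and verify in turn that: (i)~$c(w) > x$, by the last sentence of Lemma~\ref{lem:elem_core}; (ii)~$c(w) \le y_0$, since $c(w)$ is elementary over $x$ with $c(w)\le w\le y$, so the maximality assertion of Lemma~\ref{lem:elem_core} forces $c(w) \le \core_x(y) = y_0$; (iii)~$c$ is order-preserving, as remarked immediately after Lemma~\ref{lem:elem_core}; (iv)~$c(w)\le w$ by construction; and (v)~$c$ restricts to the identity on $(x,y_0]$, because for $w \le y_0$ the transitivity-like property of $\preceq$ noted after Definition~\ref{def:elem_split}, applied to $x \le w \le y_0$ together with $x\preceq y_0$, yields $x\preceq w$, whence $\core_x(w) = w$.

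Granted (i)--(v), the standard order-theoretic fact that a poset endomap $c$ with $c\le\id$ induces a self-map of geometric realisations homotopic to the identity (via the prism homotopy inserting each $c(w)$ before the corresponding $w$ in every chain) shows that $|c|$ is a deformation retraction of $|(x,y)|$ onto $|(x,y_0]|$. Since $y_0$ is the maximum of $(x,y_0]$, the realisation $|(x,y_0]|$ is a cone with apex $y_0$, hence contractible, and the lemma follows.

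The main obstacle is conceptual rather than technical: recognising $y_0 = \core_x(y)$ as the correct target for the retraction, and seeing that the hypothesis $x\not\prec y$ is exactly what promotes $y_0$ to a proper interior vertex of $(x,y)$. Once that insight is fixed, all remaining verifications are short applications of Lemma~\ref{lem:elem_core} and the order-theoretic properties of $\preceq$ recorded just after Definition~\ref{def:elem_split}.
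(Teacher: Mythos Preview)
Your proof is correct and is essentially the paper's own argument, just unpacked in more detail. The paper establishes the same chain of inequalities $w \ge \core(w) \le \core(y) = y_0$ for $w \in (x,y)$ and then simply cites Section~1.5 of~\cite{quillen78} (poset maps that are pointwise comparable induce homotopic maps on realisations) to conclude contractibility, whereas you spell out the resulting deformation retraction onto $|(x,y_0]|$ and its cone structure explicitly; the underlying idea and the use of $\core_x$ are identical.
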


The proof is essentially the same as the proof of the lemma in
Section~4 of~\cite{brown92}.

\begin{proof}
If $w\in (x,y]$, then $\core(w) \in [x,y)$ because $x \not\prec y$, and
$\core(w) \in (x,y]$ because $x < w$. So in fact $\core(w)\in(x,y)$. Also,
$\core(w)\le \core(y)$ by the previous discussion. The inequalities $w\ge
\core(w)\le \core(y)$ provide a contraction of~$|(x,y)|$, by Section~1.5
of~\cite{quillen78}.
\end{proof}

As was done in~\cite{brown92} for the Stein space of~$V$, we can build
up from $sX$ to~$\realize{\Poset_1}$ to show that $sX$ is
contractible.

\begin{corollary}
\label{cor:stein_space_cible}
The Stein space $sX$ is contractible for all $s$.
\end{corollary}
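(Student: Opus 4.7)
I will follow Brown's strategy from \cite{brown92} and show that $sX$ has the same homotopy type as $\realize{\Poset_1}$, which is contractible by Observation~\ref{obs:big_space_contractible}. Concretely, I construct a filtration $sX = Y_0 \subseteq Y_1 \subseteq \cdots$ whose union is $\realize{\Poset_1}$ and verify that every inclusion $Y_{k-1} \hookrightarrow Y_k$ is a homotopy equivalence.

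For $k \ge 0$, let $Y_k$ be the subcomplex of $\realize{\Poset_1}$ obtained from $sX$ by adjoining $\realize{[x,y]}$ for each non-elementary pair $x < y$ in $\Poset_1$ (that is, $x \not\preceq y$) with $t(y) - t(x) \le k$. A splitting of height one bisects a single whole block and is therefore elementary, so $Y_0 = sX$; every chain in $\Poset_1$ lies in some closed interval, so $\bigcup_k Y_k = \realize{\Poset_1}$. The key computation is that for every non-elementary pair $(x,y)$ with $t(y) - t(x) = k$,
\[
\realize{[x,y]} \cap Y_{k-1} \;=\; \realize{[x,y)} \cup \realize{(x,y]}\text{\,.}
\]
The inclusion ``$\supseteq$'' holds because any chain in $[x,y]$ that omits $x$ or $y$ lies in a strictly shorter closed sub-interval, which belongs either to $sX$ or to a non-elementary interval of height less than $k$, hence to $Y_{k-1}$. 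For ``$\subseteq$'', any chain containing both $x$ and $y$ cannot lie in $sX$ (since $x \not\preceq y$) and cannot lie in any closed interval $\realize{[x',y']}$ with $t(y') - t(x') < k$, because $x' \le x$ and $y' \ge y$ would force $t(y') - t(x') \ge k$. A short additional check shows that two distinct non-elementary pairs of height $k$ have poset-intersection of strictly smaller height, so the attachments at height $k$ are independent.

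Given this identity, $\realize{[x,y]}$ is contractible as a cone with apex $y$ over $\realize{[x,y)}$, while $\realize{[x,y)} \cup \realize{(x,y]}$ is the unreduced suspension of $\realize{(x,y)}$; the latter is contractible by Lemma~\ref{lem:cube_lemma} (applicable since $x<y$ and $x\not\preceq y$ imply $x\not\prec y$), so the suspension is too. Attaching a contractible subcomplex along a contractible subcomplex is a homotopy equivalence by the standard gluing lemma, whence $Y_{k-1} \hookrightarrow Y_k$ is one. Induction on $k$ then gives $sX \simeq \realize{\Poset_1}$, which is contractible. The main subtlety is establishing the intersection identity and verifying the independence of height-$k$ attachments; once those are in place, everything else reduces to Lemma~\ref{lem:cube_lemma} and a routine application of the gluing lemma.
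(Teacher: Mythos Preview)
Your proposal is correct and follows essentially the same approach as the paper: both build up from $sX$ to $\realize{\Poset_1}$ by adjoining the non-elementary closed intervals $\realize{[x,y]}$ in increasing order of $t(y)-t(x)$, observing that each is attached along $\realize{[x,y)}\cup\realize{(x,y]}$, the suspension of $\realize{(x,y)}$, which is contractible by Lemma~\ref{lem:cube_lemma}. Your write-up is more explicit about the intersection identity and the independence of the height-$k$ attachments, details the paper leaves implicit, but the underlying argument is identical.
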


\begin{proof}
By Observation~\ref{obs:big_space_contractible}, $\realize{\Poset_1}$
is contractible.  We build up from $sX$ to $\realize{\Poset_1}$
without changing the homotopy type.

Given a closed interval $[x,y]$, define $r([x,y]) \defeq t(y)-t(x)$.
We attach the contractible subcomplexes $|[x,y]|$ for $x\not\prec y$
to $sX$ in increasing order of $r$-value.  When we attach $|[x,y]|$
then, we attach it along $|[x,y)|\cup|(x,y]|$.  But this is the
suspension of $|(x,y)|$, and so is contractible by the previous lemma.
We conclude that attaching $|[x,y]|$ does not change the homotopy
type, and since $\realize{\Poset_1}$ is contractible, so is~$sX$.
\end{proof}

\medskip

For each $n\ge1$ let $sX^{\le n}$ be the full subcomplex of $sX$
spanned by vertices~$x$ with~$t(x)\le n$.  Similarly define
$sX^{<n}$, and let $sX^{=n}$ be the set of vertices $x$ with~$t(x)=n$.
Note that all of these sets are invariant under the action of $sV$.
We will show that the filtration $(sX^{\le n})_n$ of $sX$ satisfies
the assumptions of Brown's Criterion.

Thanks to Observations~\ref{obs:stabilizers}
and~\ref{obs:cocompactness}, and
Corollary~\ref{cor:stein_space_cible}, the only remaining feature of
the filtration $(sX^{\le n})_{n}$ of $sX$ that we need to verify is
that the connectivity of the pair $(sX^{\le n+1},sX^{\le n})$ tends
to~$\infty$ as~$n$ tends to~$\infty$.  This is exactly the condition
that proved difficult to verify for the filtration of
$\realize{\Poset_1}$ in~\cite{kochloukova10}.

We will verify the relative connectivity in the next section using
discrete Morse theory.  The idea is to treat $t$ as a height function
on $sX$ and inspect descending links.


\section{Connectivity of the descending links and proof of the Main
Theorem}
\label{sec:desc_link_conn}

We will use the following Morse-theoretic tools.  Fix a vertex~$x$
in~$sX$, say with~$t(x)=n$, and call~$n$ the \emph{height} of~$x$.
The \emph{descending link}~$\dlk(x)$ of~$x$ is defined to be the
intersection of~$\lk(x)$ with~$sX^{< n}$.  The fact that vertices with
equal heights cannot share an edge means that we can obtain~$sX^{\le
n}$ from $sX^{<n}$ by ``gluing in'' each vertex at height~$n$ along
its descending link.  This is made rigorous by the Morse lemma
(cf.~Corollary~2.6 of~\cite{bestvina97}):

\begin{lemma}
\label{lem:morse}
Let~$X$ be a simplicial complex and let~$f \colon X^{(0)} \to \Z$ be
such that $f(x) \ne f(y)$ for adjacent vertices~$x$ and~$y$ of~$X$.
If~$\dlk(x)$ is~$(k-1)$-connected for every vertex~$x \in X^{=n}$, then
the pair $(X^{\le n},X^{<n})$ is $k$-connected, that is, the
inclusion~$X^{<n} \hookrightarrow X^{\le n}$ induces an isomorphism in
$\pi_j, j < k$ and an epimorphism in $\pi_k$.
\end{lemma}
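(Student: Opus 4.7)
The plan is to realize $X^{\le n}$ as obtained from $X^{<n}$ by gluing in contractible pieces along the descending links, and then appeal to standard homotopy-excision facts.

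First I would observe that the hypothesis on $f$ -- that adjacent vertices have distinct values -- implies that no two vertices of height exactly $n$ lie in a common simplex. Consequently, for any vertex $x \in X^{=n}$, every simplex of $X^{\le n}$ containing $x$ other than $\{x\}$ itself has all its other vertices in $X^{<n}$, and these other vertices span precisely $\dlk(x)$. In other words, the link of $x$ taken inside $X^{\le n}$ coincides with $\dlk(x)$, and the closed star $\overline{\st}_{X^{\le n}}(x)$ is the cone $x \ast \dlk(x)$, which is contractible. Moreover, if $x \ne y$ are both vertices of height $n$, their closed stars in $X^{\le n}$ intersect only in $X^{<n}$ (a common simplex would have to contain both $x$ and $y$, contradicting non-adjacency).

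Next I would express $X^{\le n}$ as the pushout of $X^{<n}$ together with all the closed stars $\overline{\st}_{X^{\le n}}(x)$ for $x \in X^{=n}$, glued along their intersections $\dlk(x) \subseteq X^{<n}$. Because the stars attach independently (only through $X^{<n}$), it suffices to analyze the effect of attaching one cone $x \ast \dlk(x)$ at a time, and to take a colimit if $X^{=n}$ is infinite. Attaching a contractible space $C = x \ast \dlk(x)$ along a subspace $A = \dlk(x)$ that is $(k-1)$-connected produces a pair $(Y \cup C, Y)$ that is $k$-connected: from the long exact sequence of the pair $(C,A)$ the inclusion $A \hookrightarrow C$ is $k$-connected, and then homotopy excision (or the standard pushout argument) transfers this to $(Y \cup C, Y)$. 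Iterating (or passing to the colimit) over all $x \in X^{=n}$ yields that $(X^{\le n}, X^{<n})$ is $k$-connected, which is exactly the stated conclusion about $\pi_j$.

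The main technical point to handle with care is the colimit step when $X^{=n}$ is infinite, so that the $k$-connectivity of each individual attachment combines to $k$-connectivity of the whole pair; here the disjointness of the attachments outside $X^{<n}$ makes the compact-support argument routine, because any map of a finite CW pair into $(X^{\le n}, X^{<n})$ factors through the attachment of only finitely many stars. Everything else is bookkeeping: once the descriptions of closed stars as cones on descending links and the disjointness of distinct stars are in hand, the result is an application of a well-known homotopy-excision / attaching-cells principle, and indeed is precisely Corollary~2.6 of \cite{bestvina97} cited in the statement.
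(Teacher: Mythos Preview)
Your argument is correct and is precisely the standard proof underlying the cited reference; the paper itself does not give a proof of this lemma but simply records it as a consequence of Corollary~2.6 of~\cite{bestvina97}. Since you have correctly identified the closed stars as disjoint cones on the descending links and handled the gluing via homotopy excision together with a compactness argument for the infinite case, there is nothing to add.
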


Fix a vertex~$x$ in~$sX$ and consider~$L(x)\defeq\dlk(x)$.  As a
subcomplex of~$\realize{\Poset_1}$,~$L(x)$ is the collection of
simplices given by chains~$y_k<\ldots< y_0< x$ with~$y_k\prec x$.  We
first consider the subcomplex~$L_0(x)$ of~$L(x)$ consisting of such
chains with $y_k\sqsubset x$.

The complex~$L_0(x)$ naturally projects onto a matching complex.

\begin{definition}
Let~$\Gamma$ be a graph.  The \emph{matching complex}~$\M(\Gamma)$
of~$\Gamma$ is the simplicial complex with a~$k$-simplex for every
collection~$\{e_0, \dots, e_k\}$ of~$k+1$ pairwise disjoint edges,
with the face relation given by inclusion.  If we consider oriented
edges, we get the \emph{oriented matching complex}~$\M^o(\Gamma)$.
\end{definition}

The specific graphs that we will need are generalizations of complete
graphs.  For~$s\in\N$, let~$sK_n$ be the graph with~$n$ nodes and~$s$
edges between any two distinct nodes.  In particular~$1K_n$ is
just~$K_n$, the complete graph on~$n$ nodes.  Color the edges from~$1$
to~$s$ so that any two distinct nodes have precisely one edge of each
color between them.  For a fixed labeling~$1$ through~$n$ of the nodes
of each~$sK_n$, we have a projection $s\pi\: sK_n \rightarrow K_n$ for
each~$s$, given by sending an edge with endpoints~$i$ and~$j$ to the
unique edge of~$K_n$ with endpoints~$i$ and~$j$.  Since disjoint edges
map to disjoint edges, this induces a map $\M(s\pi)\: \M(sK_n)
\rightarrow \M(K_n)$.

For any~$\ell\in\Z$, define $\nu(\ell) \defeq \bigl\lfloor
\frac{\ell-2}{3} \bigr\rfloor$.

\begin{lemma}\label{lem:match_cpx_conn}
$\M(sK_n)$ is~$(\nu(n)-1)$-connected, as is~$\M^o(sK_n)$.
\end{lemma}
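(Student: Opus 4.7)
The plan is to reduce to the classical connectivity result for the matching complex of the complete graph, via a wedge decomposition that peels off one parallel edge at a time. First, the oriented case reduces to the unoriented case: each colored edge of $sK_n$ admits two orientations, so the oriented edges of $sK_n$ are in natural vertex-disjointness-preserving bijection with the edges of $2sK_n$; hence $\M^o(sK_n) \cong \M(2sK_n)$, and it suffices to prove that $\M(sK_n)$ is $(\nu(n)-1)$-connected for every $n$ and every $s \ge 1$.

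The technical heart will be a wedge decomposition lemma. Let $G$ be a finite multigraph containing at least two parallel edges $e_1, e_2$ between vertices $u$ and $v$, and let $G^-$ be $G$ with $e_2$ deleted. I would show that
\[ \M(G) \simeq \M(G^-) \vee \Sigma \M(G - \{u, v\}), \]
where $G - \{u, v\}$ is the induced subgraph on the remaining vertices. The argument is a short mapping cone analysis: $\M(G) = \M(G^-) \cup \overline{\st}_{\M(G)}(e_2)$, where $\overline{\st}_{\M(G)}(e_2) = e_2 * \M(G - \{u, v\})$ is a cone (hence contractible) and intersects $\M(G^-)$ in $\M(G - \{u, v\})$. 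The inclusion $\M(G - \{u, v\}) \hookrightarrow \M(G^-)$ factors through the contractible subcomplex $\overline{\st}_{\M(G^-)}(e_1) = e_1 * \M(G - \{u, v\})$, so it is nullhomotopic; the mapping cone of a nullhomotopic map $A \to X$ is homotopy equivalent to $X \vee \Sigma A$, giving the claim.

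I would then iterate the decomposition: peel off one parallel edge of $sK_n$ at a time (and recursively inside each suspension summand, using $\Sigma(X \vee Y) \simeq \Sigma X \vee \Sigma Y$) until every pair in every surviving subcomplex has multiplicity~$1$. Because multiplicities stay $\ge 1$ throughout, the hypothesis of the lemma always applies. The output is a wedge decomposition
\[ \M(sK_n) \simeq \bigvee_{\alpha} \Sigma^{k_\alpha} \M(K_{n - 2 k_\alpha}), \]
indexed by the leaves $\alpha$ of a decomposition tree with $k_\alpha \ge 0$ and $n - 2 k_\alpha \ge 2$. By the classical theorem that $\M(K_m)$ is $(\nu(m)-1)$-connected, each summand is $(k_\alpha + \nu(n - 2k_\alpha) - 1)$-connected. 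The elementary inequality $\nu(n) - \nu(n - 2k) \le \lceil 2k/3 \rceil \le k$ implies every summand is at least $(\nu(n)-1)$-connected, so the wedge is too, which is precisely what the lemma asserts.

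The main obstacle is the nullhomotopy step in the wedge decomposition lemma: the factoring through $\overline{\st}_{\M(G^-)}(e_1)$ requires $G^-$ to still contain an edge between $u$ and $v$. This hypothesis is maintained throughout the peeling-off induction but fails once any pair's multiplicity reaches~$1$, which is exactly why the base case of the whole argument has to be the nontrivial classical result about $\M(K_n)$ rather than something purely combinatorial.
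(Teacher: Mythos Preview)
Your argument is correct and takes a genuinely different route from the paper. The paper applies Quillen's Theorem~9.1 from~\cite{quillen78} to the forgetful projection $\M(s\pi)\colon \M(sK_n)\to\M(K_n)$: the fiber over a $k$-simplex is a join of $k+1$ nonempty finite discrete sets, hence a wedge of $k$-spheres, and links in $\M(K_n)$ are again matching complexes of smaller complete graphs, so Quillen's theorem transfers the known connectivity of $\M(K_n)$ directly; the oriented case is handled by a second application to $\M^o(sK_n)\to\M(sK_n)$. Your approach instead peels off one parallel edge at a time via a mapping-cone identity, producing an explicit splitting $\M(sK_n)\simeq\bigvee_\alpha\Sigma^{k_\alpha}\M(K_{n-2k_\alpha})$. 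This is more elementary---it avoids the fiber theorem entirely---and in fact yields the homotopy type rather than merely a connectivity bound; the paper's argument is shorter but invokes a black box. One small caution: the identification $C_f\simeq X\vee\Sigma A$ for nullhomotopic $f\colon A\to X$ wants $X$ path-connected (or careful basepoint bookkeeping), which can fail for the small vertex sets appearing deep in your recursion; this is harmless because for those sizes $\nu\le 0$ and the assertion reduces to nonemptiness, but a sentence acknowledging it would tighten the write-up.
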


\begin{proof}
It is well known that~$\M(K_n)$ is~$(\nu(n)-1)$-connected, see for
example~\cite{athanasiadis04, bux12, bjorner94}.  For
any~$k$-simplex~$\sigma$ in~$\M(K_n)$, the fiber
$\M(s\pi)^{-1}(\sigma)$ is the join of the fibers of the vertices
of~$\sigma$, so in particular is homotopy equivalent to a wedge of
spheres of dimension~$k$.  Moreover, it is clear that links in $\M(K_n)$ are
themselves matching complexes of complete graphs. Therefore the hypotheses of
Theorem~9.1 in~\cite{quillen78} are satisfied, and we conclude that~$\M(sK_n)$
is~$(\nu(n)-1)$-connected. We also have an obvious map~$\M^o(sK_n)
\twoheadrightarrow \M(sK_n)$ obtained by forgetting the orientation on the
edges. The fibers of this map are similarly spherical of the right dimension, so
again using Theorem~9.1 of~\cite{quillen78} we conclude that~$\M^o(sK_n)$
is~$(\nu(n)-1)$-connected.
\end{proof}

Every vertex~$y \in L_0(x)$, say with~$t(y) = m$, is obtained from~$x$
by applying a non-trivial very elementary merging.  The merging is
given by a very elementary covering~$\calU$ of~$m$ blocks whose~$n$
bricks are indexed by the blocks of~$x$.  Two such mergings define the
same element~$y$ if and only if they differ by a permutation of the
blocks.  Consequently, denoting by~$\VE_n$ the set of very elementary
coverings by~$n$ labeled bricks up to permutation of the blocks, we
get a one-to-one correspondence between~$L_0(x)$ and~$\VE_n$.  We
obtain a partial order~$\VE_n$ from the partial order on~$\Poset_1$
via this identification.

\begin{corollary}\label{cor:very_desc_lk_conn}
$\VE_n$, and therefore~$L_0(x)$, is isomorphic to~$\M^o(sK_n)$.
Hence, both are $(\nu(n)-1)$-connected.
\end{corollary}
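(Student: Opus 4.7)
The plan is to exhibit an explicit bijection between $\VE_n$ and the set of oriented matchings of $sK_n$, check that it is compatible with the order structures, and then conclude via Lemma~\ref{lem:match_cpx_conn}.

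To construct the bijection I would first unpack the ``very elementary'' condition at the level of a single block. Every edge of a very elementary brick in $I^s$ has length at least $1/2$ and the brick has volume at least $1/2$, so either the brick is the full cube or exactly one edge has length $1/2$. Hence a very elementary covering of a single copy of $I^s$ is either trivial or consists of two complementary half-cubes split along some coordinate direction $i \in \{1, \ldots, s\}$. An element of $\VE_n$---a very elementary covering of some $I^s(m)$ with $n$ labeled bricks, taken modulo permutations of the $m$ blocks---is therefore the same data as a partition of $\{1, \ldots, n\}$ into singletons and pairs, together with, for each pair, a direction $i \in \{1, \ldots, s\}$ and an ordering of its two labels (telling which label occupies the left half of its block). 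This is precisely an oriented matching on $sK_n$: paired labels are matched nodes, directions are edge colors, and orderings are edge orientations.

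Under this bijection, $v' \le v$ in $\VE_n$ (i.e., $v$ is a very elementary splitting of $v'$) if and only if the matching of $v$ is contained in the matching of $v'$, so $\VE_n$ is isomorphic to the opposite of the face poset of $\M^o(sK_n)$. Order complexes are insensitive to reversing the order, so $L_0(x) = \realize{\VE_n}$ is the barycentric subdivision of $\M^o(sK_n)$ and hence homeomorphic to it. Lemma~\ref{lem:match_cpx_conn} then yields the $(\nu(n)-1)$-connectivity. The only nontrivial step is the combinatorial translation in the preceding paragraph; the subtle point there is to track how the quotient by block permutations leaves each matched pair with a free left/right orientation, which accounts for the appearance of $\M^o(sK_n)$ rather than $\M(sK_n)$.
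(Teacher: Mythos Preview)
Your proposal is correct and follows essentially the same approach as the paper: both identify a very elementary covering block-by-block with either a singleton or a pair of half-cubes determined by a coordinate direction, and read off from each two-brick block an oriented, colored edge of $sK_n$. Your write-up is in fact slightly more careful than the paper's in two respects---you spell out that the induced poset isomorphism reverses order (so that $\realize{\VE_n}$ is the barycentric subdivision of $\M^o(sK_n)$), and you flag why the quotient by block permutations still leaves an orientation on each matched pair---but the underlying argument is the same.
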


\begin{proof}
Consider a non-trivial very elementary dyadic covering~$\calU$
of~$I^s(m)$ with~$n$ bricks labeled~$1$ to~$n$.  Since~$\calU$ is very
elementary, each block consists of at most two bricks.  If it does
consist of two bricks, then it defines an oriented edge in the
graph~$sK_n$ as follows.  The two bricks are
\begin{equation*}
\NDInterval^{k-1} \times \biggl(\NDInterval \cap
\Bigl[0,\frac{1}{2}\Bigr]\biggr) \times \NDInterval^{s-k}
\quad \text{and} \quad
\NDInterval^{k-1} \times \biggl(\NDInterval \cap
\Bigl[\frac{1}{2},1\Bigr]\biggr) \times \NDInterval^{s-k}
\end{equation*}
for some~$1 \le k \le s$.  Say the first brick is labeled~$i$ and the
second brick is labeled~$j$.  Then the edge in~$sK_n$ defined by this
block points from~$i$ to~$j$ and has color~$k$.

See Figure~\ref{fig:map-VE-to-Mo} for an example.
\begin{figure}
\centering

\begin{tikzpicture}
\begin{scope}[xshift=-2cm,scale=1.6]
\draw[thick] (1,-.35) rectangle +(0.7,0.7)
      (2,-.35) rectangle +(0.7,0.7)
      (3,-.35) rectangle +(0.7,0.7);
\draw (1,-.35) rectangle +(0.7,0.35) node[pos=0.5] {$5$}
      (1,0) rectangle +(0.7,0.35) node[pos=0.5] {$2$}
      (2,-.35) rectangle +(0.7,0.7) node[pos=0.5] {$4$}
      (3,-.35) rectangle +(0.35,0.7) node[pos=0.5] {$1$}
      (3.35,-.35) rectangle +(.35,0.7) node[pos=0.5] {$3$};
\end{scope}

\draw[|->] (4.6,0) -- (6.5,0) node[pos=0.5, above] {$\pi$};

\begin{scope}[xshift=8.9cm]
\coordinate (A) at (90:1.5);
\coordinate (B) at (162:1.5);
\coordinate (C) at (234:1.5);
\coordinate (D) at (306:1.5);
\coordinate (E) at (18:1.5);

\draw[gray!30,thick] (D) -- (A) -- (B) -- (C) -- (D) 
  -- (E) -- (A) -- (C) -- (E) -- (B) -- cycle;

\filldraw (A) circle (1pt) node[anchor=south] {$1$}
          (B) circle (1pt) node[anchor=south east] {$2$}
          (C) circle (1pt) node[anchor=north east] {$3$}
          (D) circle (1pt) node[anchor=north west] {$4$}
          (E) circle (1pt) node[anchor=south west] {$5$};
          
\draw[thick,shorten <=4pt,shorten >=4pt,-latex] (A) -- (C);          
\draw[thick,shorten <=4pt,shorten >=4pt,-latex,dashed] (E) -- (B);
\end{scope}
\end{tikzpicture}
\caption{An example of~$\pi\: \VE_n \to \M^o(sK_n)$ in the case~$n=5$
and~$s=2$.  The solid arrow corresponds to a merge along a vertical
face, and the dashed arrow corresponds to a merge along a horizontal
face.}
\label{fig:map-VE-to-Mo}
\end{figure}
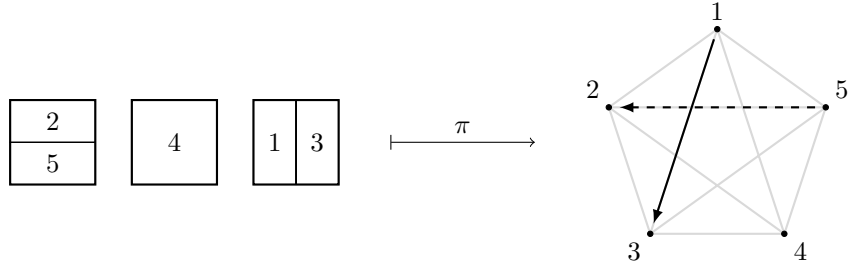

This procedure defines an isomorphism of ordered sets
$\VE_n\rightarrow\M^o(sK_n)$.
The connectivity statement now follows from
Lemma~\ref{lem:match_cpx_conn}.
\end{proof}

The next step is to show that~$L(x)$ is highly connected by building
up from~$L_0(x)$ to~$L(x)$ along highly connected links.  If~$s = 1$,
then $L_0(x) = L(x)$ so we may assume~$s > 1$ in what follows.

We start by giving a combinatorial description of~$L(x)$ similar to
the one given for~$L_0(x)$ before.  Every vertex in~$L(x)$ is obtained
from~$x$ via a non-trivial elementary merging.  We can therefore
replace ``very elementary'' by ``elementary'' in the discussion
of~$\VE_n$ above.  We get that the poset~$E_n$ of elementary mergings
of~$n$ labeled bricks up to permutation of blocks is isomorphic
to~$L(x)$.

We now describe the Morse function that determines in which order we
build up from~$L_0(x)$ to~$L(x)$.  For any~$\calU\in E_n$, the volume
of any brick in~$\calU$ is at least~$1/2^s$.  For each~$0\le i\le s$
define~$c_i$ to be the number of bricks in~$\calU$ with
volume~$1/2^i$.  Then define~$c$ to be the lexicographically ordered
function $c=(c_s,c_{s-1},\dots,c_3,c_2)$.  Note that we \emph{do~not}
include~$c_1$ or $c_0$ in this tuple; this will be crucial to our
arguments.  Denote by~$b$ the number of blocks of~$\calU$.  The
\emph{height}~$h$ of~$\calU$ is now defined to be~$h=(c,b)$, ordered
lexicographically.

\begin{observation}\label{obs:ht_fxn_rules}
Let~$\calX$ and~$\calY$ be vertices in~$E_n$ with~$\calX<\calY$.  Then
$c(\calX)\ge c(\calY)$ and~$b(\calX)<b(\calY)$, so in particular
$h(\calX)<h(\calY)$ if and only if~$c(\calX)=c(\calY)$,
and~$h(\calX)>h(\calY)$ if and only if~$c(\calX)>c(\calY)$.
\end{observation}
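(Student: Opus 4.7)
The plan is to translate the statement into a claim about the label-preserving bijection between the brick sets of~$\calX$ and~$\calY$, and to track how individual brick volumes evolve when $y_\calX$ is split further to $y_\calY$. The $b$-inequality is immediate: under the isomorphism $E_n \cong L(x)$, the relation $\calX < \calY$ corresponds to $y_\calX < y_\calY$ in $\Poset_1$, so $b(\calX) = t(y_\calX) < t(y_\calY) = b(\calY)$.

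For the main inequality $c(\calX) \ge c(\calY)$, I would first observe that $y_\calX \preceq x$ together with $y_\calX \le y_\calY \le x$ forces $y_\calX \preceq y_\calY$, so the intermediate splitting is itself elementary. Write it via an elementary covering $\calZ$ of $I^s(t(y_\calX))$ whose bricks $\tilde B_1, \ldots, \tilde B_\ell$ become the blocks of $y_\calY$. Since $y_\calX \to x$ factors as $y_\calX \to y_\calY \to x$, the covering $\calX$ refines $\calZ$, so each of the $n$ labeled bricks of $\calX$ lies inside a unique $\tilde B_j$. When $\tilde B_j$ is rescaled to a full block of $y_\calY$, each of its $1/2$-edges stretches to $1$; a brick of $\calX$ of volume $v$ sitting inside $\tilde B_j$ (whose volume $v_{\tilde B_j}$ lies in $[1/2^s,1]$) therefore corresponds under the label bijection to a brick of $\calY$ of volume $v/v_{\tilde B_j} \ge v$. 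Brick volumes thus weakly increase from $\calX$ to $\calY$.

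This pointwise monotonicity yields the cumulative estimate
$$\sum_{j \ge i} c_j(\calY) \;\le\; \sum_{j \ge i} c_j(\calX) \quad\text{for every } i,$$
because every $\calY$-brick of volume at most $1/2^i$ is the image of a $\calX$-brick of volume at most $1/2^i$. The lex inequality $c(\calX) \ge c(\calY)$ on $(c_s, c_{s-1}, \ldots, c_2)$ then falls out by a short induction on $i$: if $c_j(\calX) = c_j(\calY)$ for $s \ge j > i$, the cumulative estimate at index $i$ forces $c_i(\calY) \le c_i(\calX)$. The $h$-biconditionals are then immediate from the lex order on $(c,b)$: the combination $c(\calX) \ge c(\calY)$ and $b(\calX) < b(\calY)$ leaves only the dichotomy $c(\calX) > c(\calY)$, giving $h(\calX) > h(\calY)$, versus $c(\calX) = c(\calY)$, giving $h(\calX) < h(\calY)$ through the strict $b$-increase. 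I do not expect a serious obstacle; the only place that requires care is the geometric refinement step, namely verifying that $\calX$ refines $\calZ$ so that each brick of $\calX$ sits in a single $\tilde B_j$ and that rescaling multiplies the volume by exactly $1/v_{\tilde B_j}$. After that the combinatorics is routine bookkeeping.
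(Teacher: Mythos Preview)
Your argument is correct. The paper states this as an observation without proof, treating it as evident; your write-up is precisely the natural justification the authors presumably had in mind: track the label-preserving bijection on bricks induced by factoring the splitting $y_\calX \to x$ through $y_\calY$, observe that each brick volume can only increase under the rescaling, and read off the lexicographic inequality on $c$ from the resulting cumulative bounds.
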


Fix a vertex~$\calU$ in~$E_n\setminus \VE_n$.  The descending link
of~$\calU$ with respect to~$h$ will be denoted~$\dlk_h(\calU)$.  There
are two types of vertices $\calV$ in~$\dlk_h(\calU)$.  First, we could
have~$\calU>\calV$ and $h(\calU)>h(\calV)$, which by the above
observation implies that~$c(\calU)=c(\calV)$.  The full subcomplex
of~$\dlk_h(\calU)$ spanned by such vertices will be called the
\emph{(descending) down-link}.  Second, we could have $\calU<\calV$
and~$h(\calU)>h(\calV)$, which implies that~$c(\calU)>c(\calV)$.  The
full subcomplex of~$\dlk_h(\calU)$ spanned by these vertices will be
called the \emph{(descending) up-link}.

\begin{observation}
Vertices~$\calV$ in the down-link and~$\calW$ in the up-link
automatically satisfy~$\calV<\calW$.  Therefore~$\dlk_h(\calU)$ is a
join of the down-link and the up-link.
\end{observation}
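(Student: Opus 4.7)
The plan is to exploit transitivity of the partial order on $\Poset_1$. By definition of the down-link, any vertex $\calV$ there satisfies $\calV < \calU$ in $E_n$, and by definition of the up-link any vertex $\calW$ there satisfies $\calU < \calW$. Since the order on $E_n$ is inherited from the order $\le$ on $\Poset_1$, which is transitive, we obtain $\calV < \calU < \calW$ and hence $\calV < \calW$ at once, which is the first sentence.

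For the join statement I would argue directly from the definition. Given a simplex $\sigma_D = \{\calV_1 < \cdots < \calV_p\}$ in the down-link and $\sigma_U = \{\calW_1 < \cdots < \calW_q\}$ in the up-link, the first sentence yields $\calV_p < \calW_1$, so $\sigma_D \cup \sigma_U$ is a chain in $E_n$. Its minimum is $\calV_1$, which already satisfies $\calV_1 \preceq x$ because $\sigma_D$ was a simplex of $L(x)$, so $\sigma_D \cup \sigma_U$ is itself a simplex of $L(x)$. Inserting $\calU$ between $\calV_p$ and $\calW_1$ preserves the chain property, so $\sigma_D \cup \sigma_U$ lies in $\lk(\calU)$; each vertex has $h$-value less than $h(\calU)$ by the respective definitions of down- and up-link, so $\sigma_D \cup \sigma_U$ is a simplex of $\dlk_h(\calU)$. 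Conversely, every vertex of $\dlk_h(\calU)$ must be comparable to $\calU$ (since together with $\calU$ it forms a chain in $L(x)$), so each vertex lies in exactly one of the two subcomplexes; restricting an arbitrary simplex of $\dlk_h(\calU)$ to the vertices below $\calU$ and above $\calU$ exhibits it as a union of a down-link simplex and an up-link simplex. This is exactly the join decomposition.

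There is really no obstacle: everything reduces to transitivity of $\le$ on $\Poset_1$. The only subtlety worth flagging is to verify that the down-link and the up-link jointly exhaust the vertex set of $\dlk_h(\calU)$ — that is, that every vertex of $\dlk_h(\calU)$ is strictly comparable to $\calU$ in $E_n$ and not merely $h$-smaller — which is immediate from membership in $\lk(\calU)$.
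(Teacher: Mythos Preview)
Your argument is correct and is precisely the reasoning the paper leaves implicit: the observation is stated without proof because, once one notes that $E_n$ is (the order complex of) a poset, transitivity of $<$ gives $\calV<\calU<\calW$, and the link of any vertex in an order complex is automatically the join of its down-set and up-set restricted to the relevant vertices. Your verification that the down-link and up-link together exhaust the vertices of $\dlk_h(\calU)$, and that any chain from one side concatenates with any chain from the other, is exactly this standard fact spelled out.
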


This allows us to study the up-link and the down-link separately.

\begin{lemma}
\label{lem:two_bricks}
If~$\calU$ has a block with precisely two bricks, then the up-link
of~$\calU$ is contractible, and hence so is~$\dlk_h(\calU)$.
\end{lemma}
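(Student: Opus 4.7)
The plan is to exploit the product structure of the poset of refinements above $\calU$, together with the observation that refining a $2$-brick block is invisible to $c$. Let $B$ denote the block of $\calU$ with precisely two bricks; these bricks must be half-cubes separated along a unique axis, which I will call~$k$.

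First I would argue that the only refinements of $B$ occurring in any $\calV\ge\calU$ within $E_n$ are either (i) leave $B$ intact, or (ii) split $B$ along $k$ into two singleton sub-blocks. The reason is that the two labeled bricks of $\calU$ inside $B$ (corresponding to two fixed blocks of $x$) must be preserved in $\calV$, so any dyadic splitting of $B$ induced by $\calV$ must have bricks that are unions of those two bricks---only $\{B\}$ and $\{\text{brick}_1,\text{brick}_2\}$ qualify. Since the blocks of $\calU$ are refined independently, every $\calV\ge\calU$ admits a unique decomposition $\calV=(r_B,r')$ with $r_B\in\{\id,\text{split}_k\}$ and $r'$ a refinement of the remaining blocks. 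The only decomposition excluded from $E_n$ is $(\text{split}_k, r^*)$, where $r^*$ refines every other block maximally into singletons, since this would yield $y_\calV=x$.

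Next, each brick of $B$ has volume $1/2$ and becomes a volume-$1$ singleton after a $k$-split, so refining $B$ does not change any component of $c=(c_s,\ldots,c_2)$. Consequently $c(\calV)=c(r')$ depends only on $r'$. Writing $R''$ for the poset of refinements $r'$ of the remaining blocks satisfying $c(r')<c(\calU)$, the up-link is
\[
U=\bigl(\{\id,\text{split}_k\}\times R''\bigr)\setminus\{(\text{split}_k,r^*)\}.
\]
Defining $\phi\colon U\to U$ by $\phi(r_B,r')\defeq(\id,r')$ gives a poset map (well-defined into $U$ since $r'\in R''$ forces $(\id,r')\in U$) with $\phi(\calV)\le\calV$ for every $\calV$. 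The standard principle that pointwise-comparable poset maps have homotopic realizations (cf.~Section~1.5 of~\cite{quillen78}) then shows that $|U|$ deformation retracts onto $|\phi(U)|=|\{\id\}\times R''|\cong|R''|$.

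Finally, $r^*$ belongs to $R''$ because $c(r^*)=0$, while $c(\calU)>0$ (the hypothesis $\calU\in E_n\setminus\VE_n$ together with the very-elementariness of $B$ forces some \emph{other} block to be non-very-elementary). As $r^*$ is the maximum of $R''$, the order complex $|R''|$ is a cone, hence contractible, and therefore so is $|U|$. Contractibility of $\dlk_h(\calU)$ then follows from the join decomposition established in the preceding observation. The main obstacle will be carrying out the first step rigorously---verifying that the labeling constraint on $B$'s two bricks really does restrict refinements of $B$ to only those two options---since this is where the special structure of a $2$-brick block enters; the remainder is a routine combination of a poset retraction and a cone argument.
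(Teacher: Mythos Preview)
Your proof is correct and follows essentially the same approach as the paper: your map $\phi$ is precisely the paper's assignment $\calV \mapsto \calV_0$ (undo any splitting of $B$), and your cone point $(\id,r^*)$ is the paper's $\calZ_B$, the maximal elementary splitting of $\calU$ that leaves $B$ intact. The paper compresses your two-step contraction (retract via $\phi$, then cone on the maximum) into the single zigzag $\calV \ge \calV_0 \le \calZ_B$, but the content is identical.
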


\begin{proof}
Let~$B$ be a block in~$\calU$ with two bricks.  Note that splitting
only~$B$ does not yield a vertex with lower height.  For a
vertex~$\calV$ of the up-link we define a vertex~$\calV_0$ as follows
(see Figure~\ref{fig:building_up_the_link}).  Since~$\calV$ is in the
up-link, it is obtained from~$\calU$ by splitting.  Let~$\calV_0$ be
the covering obtained from~$\calU$ by doing all the same splittings as
for~$\calV$, except that~$B$ is not split (whether or not it was split
for~$\calV$).  Then~$\calV_0>\calU$, since~$\calV$ was obtained by
splitting more than just~$B$, as observed above.  It is also clear
that~$c(\calV_0)<c(\calU)$, and so~$\calV_0$ is again in the up-link
of~$\calU$.  Now let~$\calZ_B$ be the maximal elementary splitting
of~$\calU$ that does not split~$B$.  Then for all~$\calV$ in the
up-link, we have~$\calV_0\le\calZ_B$.  Hence we have the
inequalities~$\calV\ge\calV_0\le\calZ_B$, which provide a
contraction of the up-link of~$\calU$, by Section~1.5
of~\cite{quillen78}.
\end{proof}

\begin{figure}
\centering
\begin{tikzpicture}[scale=0.5]
\begin{scope} 
\filldraw[xshift=5cm,gray!10] (1.5,-1) rectangle (3.5,1);
\draw[thick,xshift=5cm] (-1,-1) rectangle (1,1)
      (1.5,-1) rectangle (3.5,1);
\draw[xshift=5cm] (-1,0) -- (1,0)
          (0,-1) -- (0,1)
          (2.5,-1) -- (2.5,1)
          
          (-.5,.5) node {1}
          (.5,.5) node {2}
          (-.5,-.5) node {3}
          (.5,-.5) node {4}
          (2,0) node {5}
          (3,0) node {6}

          (-2cm,0) node {$\calU$};
\end{scope}

\filldraw[xshift=-1cm,yshift=-3cm,gray!10]
      (4,-1) rectangle (6,1);
\draw[thick,xshift=-1cm,yshift=-3cm] (-1,-1) rectangle (1,1)
      (1.5,-1) rectangle (3.5,1)
      (4,-1) rectangle (6,1);

\draw[xshift=-1cm,yshift=-3cm]
      (2.5,-1) -- (2.5,1)
      (0,-1) -- (0,1)
      (5,-1) -- (5,1)

      (-.5,0) node {1}
      (.5,0) node {2}
      (2,0) node {3}
      (3,0) node {4}
      (4.5,0) node {5}
      (5.5,0) node {6}
      (-2cm,0) node {$\calV_0$};
        
\filldraw[xshift=7cm,yshift=-6cm,gray!10]
      (4,-1) rectangle (6,1)
      (6.5,-1) rectangle (8.5,1);
\draw[thick,xshift=7cm,yshift=-6cm] (-1,-1) rectangle (1,1)
      (1.5,-1) rectangle (3.5,1)
      (4,-1) rectangle (6,1)
      (6.5,-1) rectangle (8.5,1);
\draw[xshift=7cm,yshift=-6cm]
      (0,-1) -- (0,1)
      (2.5,-1) -- (2.5,1)

      (-.5,0) node {1}
      (.5,0) node {2}
      (2,0) node {3}
      (3,0) node {4}
      (5,0) node {5}
      (7.5,0) node {6}
      (-2cm,0) node {$\calV$};
       
\filldraw[xshift=-3.5cm,yshift=-9cm,gray!10]
      (9,-1) rectangle (11,1);
\draw[thick,xshift=-3.5cm,yshift=-9cm] (-1,-1) rectangle (1,1)
      (1.5,-1) rectangle (3.5,1)
      (4,-1) rectangle (6,1)
      (6.5,-1) rectangle (8.5,1)
      (9,-1) rectangle (11,1);
\draw[xshift=-3.5cm,yshift=-9cm]
       (10,-1) -- (10,1)

       (0,0) node {1}
       (2.5,0) node {2}
       (5,0) node {3}
       (7.5,0) node {4}
       (9.5,0) node {5}
       (10.5,0) node {6}
       (-2cm,0) node {$\calZ_B$};

\draw (6.25,-1.15) -- (1.5,-1.85)
      (6.25,-1.15) -- (10.75,-4.85)
      (1.5,-4.15) -- (1.5,-7.85)
      (1.5,-4.15) -- (10.75,-4.85);
\end{tikzpicture}
\caption{A step in building up from~$\VE_6$ to~$E_6$ as described in
the proof of Lemma~\ref{lem:two_bricks}.  The block~$B$ of the
covering $\calU$ and its images under the various splittings are
highlighted.}
\label{fig:building_up_the_link}
\end{figure}
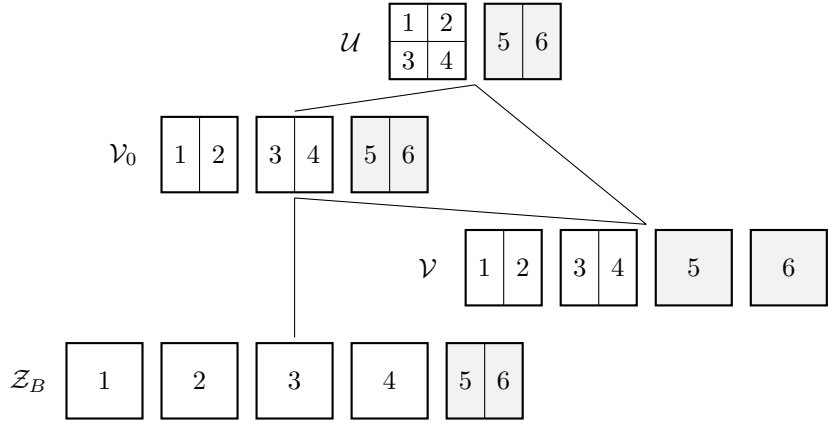

For~$\ell\in\Z$, define
$\eta(\ell)\defeq\bigl\lfloor\frac{\ell-2}{2^s}\bigr\rfloor$.  Note
that, for a fixed~$s$, as $n\rightarrow\infty$,~$\eta(n)$ increases
monotonically to~$\infty$.

\begin{lemma}\label{lem:no_two_bricks}
If~$\calU$ has no block with precisely two bricks,
then~$\dlk_h(\calU)$ is at least $(\eta(n)-2)$-connected.
\end{lemma}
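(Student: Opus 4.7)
The plan is to bound the connectivity of $\dlk_h(\calU)$ by exploiting its join decomposition $\dlk_h(\calU) = \text{(down-link)} * \text{(up-link)}$ and analyzing each factor.

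First I would identify the down-link with $\M^o(sK_p)$, where $p$ is the number of un-split (single-brick, volume-$1$) blocks of $\calU$. The key point is that a merging which preserves $c$ can only combine pairs of volume-$1$ bricks (producing a pair of volume-$1/2$ bricks inside a single block); merging any brick of volume $\le 1/2$ would strictly decrease some $c_i$ with $i \ge 2$. The identification with $\M^o(sK_p)$ is then entirely analogous to Corollary~\ref{cor:very_desc_lk_conn}, and Lemma~\ref{lem:match_cpx_conn} gives that the down-link is $(\nu(p)-1)$-connected.

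Next I would analyze the up-link. Each element $\calW$ corresponds to a non-trivial elementary refinement $\calA = \bigsqcup_B \calA_B$ of $\calU$'s block structure (block-by-block, with $\calU|_B$ refining $\calA_B$) subject to two conditions: $\calA$ is non-trivial (so $c$ strictly drops) and $\calA \ne \calU$ (so the resulting $\calW$ still lies in $E_n$). Un-split blocks contribute trivially. For each split block $B$ (with $\ge 3$ bricks by hypothesis), let $P_B$ be the bounded poset of valid $\calA_B$'s, with minimum the trivial refinement and maximum $\calU|_B$; its proper part $\overline{P_B}$ is nonempty because for any block with $\ge 3$ bricks one can always find an axis $x_i = 1/2$ not straddled by any $\calU|_B$-brick and take the resulting binary split, which sits strictly between the two extremes. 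Consequently the up-link is the order complex of the proper part of $\prod_B P_B$, and the standard formula for the proper part of a product of bounded posets yields, for $q \ge 2$,
\[
\text{up-link} \;\simeq\; \bigl(\underset{B \text{ split}}{*}\, \Delta(\overline{P_B})\bigr) \,*\, S^{q-2},
\]
where $q$ is the number of split blocks of $\calU$ (and the up-link is just $\Delta(\overline{P_B})$ for $q=1$). Since each $\Delta(\overline{P_B})$ is at least $(-1)$-connected, the up-link is at least $(2q-3)$-connected for $q\ge 2$ and at least $(-1)$-connected for $q=1$.

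Combining via the join connectivity formula, $\dlk_h(\calU)$ is at least $(\nu(p) + 2q - 2)$-connected (with the analogous edge-case bounds). Using the combinatorial inequality $n \ge p + 3q$ (each split block has at least $3$ bricks) I would finish by verifying by short case analysis on $q$ that $\nu(p) + 2q - 2 \ge \eta(n) - 2$. The main obstacle is exactly this last arithmetic step: when $q$ is small one must exploit that $\nu(p) \sim p/3$ is large because $p \sim n$, while when $q$ is large the $2q-2$ term carries the bound. The factor $2^s$ in $\eta(n) = \lfloor(n-2)/2^s\rfloor$ enters because a split block may have up to $2^s$ bricks, giving the worst-case ratio of $n$ to $q$ that must be absorbed.
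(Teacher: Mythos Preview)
Your overall strategy coincides with the paper's: split $\dlk_h(\calU)$ as down-link $*$ up-link, identify the down-link with $\VE_{k_s}\cong\M^o(sK_{k_s})$ (your $p$ is the paper's $k_s$), and bound the up-link connectivity via the big blocks (your $q$ is the paper's $k_b$). The down-link analysis is correct and matches the paper exactly.

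There is, however, a genuine gap in your up-link analysis. The claim that an elementary covering of a block by $\ge 3$ bricks always admits an axis $x_i=\tfrac12$ not straddled by any brick is \emph{false} for $s\ge 3$. For instance, in $s=3$ the five bricks
\[
[0,\tfrac12]{\times}[0,\tfrac12]{\times}[0,1],\
[0,1]{\times}[\tfrac12,1]{\times}[0,\tfrac12],\
[\tfrac12,1]{\times}[0,\tfrac12]{\times}[0,\tfrac12],\
[\tfrac12,1]{\times}[0,1]{\times}[\tfrac12,1],\
[0,\tfrac12]{\times}[\tfrac12,1]{\times}[\tfrac12,1]
\]
form an elementary covering of the unit $3$-cube in which every hyperplane $x_i=\tfrac12$ is straddled by some brick; one checks moreover that this covering has \emph{no} proper coarsening whatsoever, so $\overline{P_B}=\emptyset$. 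Hence your $(2q-3)$-bound for the up-link is not justified, and for $q=1$ the up-link can be empty rather than $(-1)$-connected as you assert.

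The paper sidesteps this by settling for a weaker up-link bound: since splitting any big block in any way lowers $h$, each big block contributes a nonempty join factor, and the up-link is declared $(k_b-2)$-connected. This, combined with the down-link bound, gives that $\dlk_h(\calU)$ is $(k_b+\nu(k_s)-1)$-connected. The final arithmetic then uses the upper bound $n\le 2^s k_b+k_s$ (each big block has at most $2^s$ bricks), not the lower bound $n\ge p+3q$ you wrote --- your inequality goes the wrong way for bounding $\eta(n)$ from above, though you do allude to the correct one in your last sentence.
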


\begin{proof}
Call a block in~$\calU$ with more than two bricks \emph{big}, and a
block with only one brick \emph{small}.  Let~$k_b$ be the number of
big blocks and~$k_s$ the number of small blocks.  By
assumption~$k_b+k_s$ is the number~$m$ of blocks in~$\calU$.

The up-link of~$\calU$ is clearly at least~$(k_b-2)$-connected, since
splitting a big block in any way produces a vertex with lower height,
and so each big block contributes a non-empty join factor to the
up-link.  The down-link of~$\calU$ is isomorphic to~$\VE_{k_s}$, and
therefore is~$(\nu(k_s)-1)$-connected by
Corollary~\ref{cor:very_desc_lk_conn}.  This implies
that~$\dlk_h(\calU)$ is~$(k_b+\nu(k_s)-1)$-connected.  Also,~$n$ is
the number of bricks in~$\calU$, so~$n\le 2^sk_b+k_s$.

Since~$s>1$,~$2^s>3$, so we have 
\begin{multline*}
k_b+\nu(k_s)-1
\ge 
k_b + \biggl\lfloor \frac{k_s-2}{2^s} \biggr\rfloor-1
\ge k_b +
\frac{k_s-2}{2^s}-2 \\
=  \frac{2^sk_b+k_s-2}{2^s} - 2 \ge
\frac{n-2}{2^s}-2\ge\eta(n)-2 \text{\,.}
\end{multline*}
We conclude that $\dlk_h(\calU)$ is at least~$(\eta(n)-2)$-connected.
\end{proof}

\begin{corollary}\label{cor:desc_lk_conn}
If~$s=1$ then~$E_n$, and hence~$L(x)$ is~$(\nu(n)-1)$-connected. If~$s>1$,
then~$E_n$, and hence~$L(x)$ is~$(\eta(n)-1)$-connected.
\end{corollary}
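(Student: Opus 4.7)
The plan is to handle $s = 1$ and $s > 1$ separately. For $s = 1$, every elementary brick has edges of length at least $1/2$, which in a single dimension already forces volume at least $1/2$. Hence every elementary brick is very elementary, so $L(x) = L_0(x) \cong \M^o(1K_n)$, which is $(\nu(n)-1)$-connected by Corollary~\ref{cor:very_desc_lk_conn}.

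For $s > 1$, I would apply discrete Morse theory on $E_n$ using the height function $h$. First I would check, via Observation~\ref{obs:ht_fxn_rules}, that $h$ takes distinct values on any two comparable vertices; since adjacent vertices in the order complex are comparable, this makes $h$ a valid Morse function (after enumerating its totally ordered range as integers). Next I would observe that $\VE_n \subseteq E_n$ is precisely the set of vertices with $c = (0, \ldots, 0)$, so $\VE_n$ appears as the initial piece of the $h$-filtration of $E_n$, and $E_n$ can be built from $\VE_n$ by attaching the vertices of $E_n \setminus \VE_n$ in order of increasing $h$. The descending link $\dlk_h(\calU)$ of each attached vertex $\calU$ is either contractible (if $\calU$ has a block with exactly two bricks, by Lemma~\ref{lem:two_bricks}) or at least $(\eta(n)-2)$-connected (by Lemma~\ref{lem:no_two_bricks}), so iterating Lemma~\ref{lem:morse} through the filtration yields that the pair $(E_n, \VE_n)$ is $(\eta(n)-1)$-connected.

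To close, I would combine the $(\nu(n)-1)$-connectivity of $\VE_n$ from Corollary~\ref{cor:very_desc_lk_conn} with the inequality $\nu(n) \ge \eta(n)$ (valid for $s \ge 2$ since $2^s \ge 4 > 3$), and feed these into the long exact sequence of homotopy groups for the pair $(E_n, \VE_n)$ to conclude that $E_n$ itself is $(\eta(n)-1)$-connected. The conceptual work has already been absorbed into Lemmas~\ref{lem:two_bricks} and~\ref{lem:no_two_bricks}; the remaining challenge is essentially bookkeeping, in particular verifying that the Morse filtration by $h$ genuinely interpolates between $\VE_n$ and $E_n$ so that the relative connectivity estimates can be assembled without loss.
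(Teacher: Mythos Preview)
Your proposal is correct and follows essentially the same approach as the paper: the $s=1$ case reduces to $E_n=\VE_n$, and for $s>1$ you build up from $\VE_n$ to $E_n$ via the Morse function $h$, invoking Lemmas~\ref{lem:two_bricks} and~\ref{lem:no_two_bricks} for the descending links and using $\eta\le\nu$ to transfer the connectivity of $\VE_n$. The extra details you supply (verifying that $h$ is a valid Morse function after reindexing its finite range, and phrasing the final step via the long exact sequence of the pair) are sound elaborations of what the paper leaves implicit.
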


\begin{proof}
The~$s=1$ case is already done, since then~$E_n=\VE_n$.  Now
suppose~$s>1$.  Then $\eta\le\nu$, so~$\VE_n$ is at
least~$(\eta(n)-1)$-connected.  Also, for~$\calU\in E_n\setminus
\VE_n$,~$\dlk_h(\calU)$ is $(\eta(n)-2)$-connected by
Lemmas~\ref{lem:two_bricks} and~\ref{lem:no_two_bricks}.
It follows from Lemma~\ref{lem:morse} that~$E_n$ is at
least~$(\eta(n)-1)$-connected.
\end{proof}

\begin{proposition}\label{prop:sublevel_conn}
For each~$n\ge1$, the pair~$(sX^{\le n},sX^{< n})$
is~$\eta(n)$-connected for~$s>1$, and the pair~$(1X^{\le n},1X^{<
n})$ is~$\nu(n)$-connected.
\end{proposition}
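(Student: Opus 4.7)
The plan is to invoke the Morse Lemma (Lemma~\ref{lem:morse}) with the height function $t\colon sX^{(0)} \to \N$. First I would verify that $t$ is an admissible Morse function on $sX$, i.e.~that adjacent vertices carry distinct values. Every edge of $sX$ lies inside an elementary closed interval $[x,y]$ with $x \preceq y$ in $\Poset_1$, so its endpoints are comparable in $\le$. Since the order on $\Poset_1$ is generated by non-trivial splittings and $t$ counts blocks in the codomain, $t$ is strictly monotone along any chain in $\Poset_1$, and in particular takes different values on adjacent vertices of $sX$.

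Next I would identify, for a vertex $x$ of height $n$, the descending link $\dlk(x)$ with the complex $L(x)$ studied in the previous subsection. By definition $\dlk(x) = \lk(x) \cap sX^{<n}$; this is the subcomplex of $\lk(x)$ spanned by vertices $y$ with $t(y) < n = t(x)$, which, together with the fact that every simplex of $sX$ containing $x$ lies in an elementary interval with top either $x$ or something above $x$, means exactly the subcomplex of chains $y_k < \cdots < y_0 < x$ with $y_k \preceq x$. That is precisely $L(x)$.

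Now I would simply apply Corollary~\ref{cor:desc_lk_conn}. For $s=1$ it gives that $L(x)$ is $(\nu(n)-1)$-connected, and for $s>1$ it gives that $L(x)$ is $(\eta(n)-1)$-connected. Plugging these into Lemma~\ref{lem:morse} with $k=\nu(n)$ (resp.~$k=\eta(n)$) yields that the pair $(sX^{\le n}, sX^{<n})$ is $\nu(n)$-connected when $s=1$ and $\eta(n)$-connected when $s>1$, as claimed.

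There is no real obstacle here: the genuinely hard work has been done in analyzing the matching complexes and constructing the Morse function $h$ on $E_n$ used in Corollary~\ref{cor:desc_lk_conn}. The only subtlety worth stating explicitly is the identification $\dlk(x) = L(x)$ and the check that $t$ separates adjacent vertices, after which the proposition follows immediately from the Morse Lemma.
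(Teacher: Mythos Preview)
Your proposal is correct and follows exactly the same approach as the paper: apply Corollary~\ref{cor:desc_lk_conn} to get the connectivity of $\dlk(x)=L(x)$ for each $x\in sX^{=n}$, and then invoke the Morse Lemma (Lemma~\ref{lem:morse}). The paper's proof is terser---it simply cites these two results---whereas you spell out the verification that $t$ separates adjacent vertices and that $\dlk(x)$ agrees with $L(x)$; these checks are implicit in the paper (the first is noted just before Lemma~\ref{lem:morse}, and $L(x)$ is introduced as $\dlk(x)$ by definition).
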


\begin{proof}
 Let~$x$ be a vertex in~$sX^{=n}$.  By
 Corollary~\ref{cor:desc_lk_conn}, the descending link $\dlk(x)$
 of~$x$ in~$sX$ is at least~$(\eta(n)-1)$-connected for~$s>1$, or
 $(\nu(n)-1)$-connected for~$s=1$.  The result now follows from
 Lemma~\ref{lem:morse}.
\end{proof}

We are now in a position to apply Brown's Criterion.

\begin{proof}[Proof of Main Theorem]
Consider the action of~$sV$ on~$sX$.  By
Corollary~\ref{cor:stein_space_cible}, $sX$ is contractible, by
Observation~\ref{obs:stabilizers}, the stabilizer of every cell is
finite\serialcomma and by Observation~\ref{obs:cocompactness}, each
$sX^{\le n}$ is finite modulo~$sV$.  By
Proposition~\ref{prop:sublevel_conn}, the connectivity of the
pairs~$(sX^{\le n},sX^{<n})$ tends to~$\infty$ as~$n$ tends
to~$\infty$.  Hence,~$sV$ is of type~$\F_\infty$ by Brown's
Criterion.
\end{proof}



\newcommand{\etalchar}[1]{$^{#1}$}

\end{document}